\documentclass[11pt]{amsart}

\usepackage{amscd,amssymb,amsmath,graphicx,verbatim}
\usepackage{wasysym}
\usepackage{mathrsfs}
\usepackage{hyperref}
\usepackage[TS1,OT1,T1]{fontenc}
\usepackage{lscape} 
\usepackage{fullpage} 
\usepackage{pslatex} 
\usepackage{tikz}
\usetikzlibrary{shapes,arrows}
\usepackage[all]{xy}
\newtheorem{theorem}{Theorem}[section]
\newtheorem{lemma}[theorem]{Lemma}
\newtheorem{corollary}[theorem]{Corollary}
\newtheorem{proposition}[theorem]{Proposition}

\theoremstyle{definition}
\newtheorem{definition}[theorem]{Definition}

\newtheorem{example}[theorem]{Example}

\theoremstyle{remark}

 \DeclareMathOperator{\Hom}{Hom}

\DeclareMathOperator{\Ext}{Ext}

\DeclareMathOperator*{\Modr}{\mathsf{Mod}-\!}

\newcommand{\Tcal}{\ensuremath{\mathcal{T}}}
\newcommand{\T}{\ensuremath{\mathcal{T}}}
\newcommand{\Gcal}{\ensuremath{\mathcal{G}}}

\newcommand{\Rcal}{\ensuremath{\mathcal{R}}}

\newcommand{\Bcal}{\ensuremath{\mathcal{B}}}

\newcommand{\Acal}{\ensuremath{\mathcal{A}}}
\newcommand{\Wcal}{\ensuremath{\mathcal{W}}}
\newcommand{\Hcal}{\ensuremath{\mathcal{H}}}

\newcommand{\Ucal}{\ensuremath{\mathcal{U}}}
\newcommand{\Vcal}{\ensuremath{\mathcal{V}}}

\newcommand{\Qcal}{\ensuremath{\mathcal{Q}}}

\newcommand{\Add}{\mathsf{Add}}
\newcommand{\Prod}{\mathsf{Prod}}

\DeclareMathOperator{\Ker}{Ker}

\numberwithin{equation}{section}

\begin{document}

\title{Torsion pairs in silting theory}
\author{Lidia Angeleri H\"ugel, Frederik Marks, Jorge Vit{\'o}ria}
\address{Lidia Angeleri H\"ugel, Dipartimento di Informatica - Settore di Matematica, Universit\`a degli Studi di Verona, Strada le Grazie 15 - Ca' Vignal, I-37134 Verona, Italy} \email{lidia.angeleri@univr.it}
\address{Frederik Marks, Institut f\"ur Algebra und Zahlentheorie, Universit\"at Stuttgart, Pfaffenwaldring 57, 70569 Stuttgart, Germany}
\email{marks@mathematik.uni-stuttgart.de}
\address{Jorge Vit\'oria, Department of Mathematics, City, University of London, Northampton Square, London EC1V 0HB, United Kingdom}
\email{jorge.vitoria@city.ac.uk}
\keywords{torsion pair, silting, cosilting, t-structure, Grothendieck category}
\subjclass[2010]{18E15,18E30,18E40}
\maketitle


\begin{abstract}
In the setting of compactly generated triangulated categories, we show that the heart of a (co)silting t-structure is a Grothendieck category if and only if the (co)silting object satisfies a purity assumption. Moreover, in the cosilting case the previous conditions are  related to the  coaisle of the t-structure being a definable subcategory. If we further assume our triangulated category to be algebraic, it follows that the heart of any nondegenerate compactly generated t-structure is a Grothendieck category.
\end{abstract}


\section{Introduction}
Silting and cosilting objects in triangulated categories are useful generalisations of tilting and cotilting objects. While (co)tilting objects have been a source of many interactions with torsion and localisation theory, it is in the setting of (co)silting objects that classification results occur more naturally. This paper strengthens this claim by showing that, in the setting of compactly generated triangulated categories, relevant torsion-theoretic structures are parametrised by suitable classes of (co)silting objects.

The concept of a silting object,  first introduced in \cite{KV} in the context of derived module categories over finite dimensional hereditary algebras, has recently been extended to  the setting of abstract triangulated categories (\cite{AI,MSSS,NSZ,PV}). 
In this paper, our focus is on t-structures and co-t-structures arising from (co)silting objects. For this purpose, we use the vast theory of \textit{purity} in compactly generated triangulated categories, where a central role is played by the category of contravariant functors on the compact objects. We show that a fundamental property of  the t-structure associated to a cosilting object $C$ -  namely, its heart  being a Grothendieck abelian category - is related to the pure-injectivity of $C$.  An analogous result  holds true for silting objects. Moreover, it turns out that in the cosilting case the pure-injectivity of $C$ is further related to the definability (in terms of coherent functors) of the coaisle of the associated t-structure. We can summarise our results as follows.\\

\textbf{Theorem} (Theorems \ref{pure-inj Grothendieck} and \ref{triple is cosilting}, Corollary \ref{comp gen t-str})
\textit{Let $(\Ucal,\Vcal,\Wcal)$ be a triple in a compactly generated triangulated category $\T$ such that $(\Ucal,\Vcal)$ is a nondegenerate t-structure and $(\Vcal,\Wcal)$ is a co-t-structure. Then the following are equivalent.
\begin{enumerate}
\item $\Vcal$ is definable in $\T$;
\item $\Vcal={}^{\perp_{>0}}C$ for a pure-injective cosilting object $C$ in $\T$;
\item $\Hcal:=\Ucal[-1]\cap\Vcal$ is a Grothendieck category.
\end{enumerate}
In particular, if we further assume $\T$ to be algebraic, it follows that any nondegenerate compactly generated t-structure in $\T$ has a Grothendieck heart.\\ }

For partial results in this direction we refer to  \cite[Proposition 4.2]{NSZ} and   \cite[Corollary 2.5]{BP}. In a forthcoming paper (\cite{MV2}), it will be proved that cosilting complexes in derived module categories are always pure-injective and give rise to definable subcategories as above. {We do not know, however, if the same holds true for arbitrary cosilting objects in compactly generated triangulated categories.} Moreover, it will be shown {in \cite{MV2}} that there are cosilting complexes (in fact, cosilting modules) inducing triples $(\Ucal,\Vcal,\Wcal)$ as above such that the t-structure has a Grothendieck heart, although it is not compactly generated. This will answer \cite[Question 3.5]{BP}.

\smallskip

The structure of the paper is as follows. In Section 2, we present our setup and provide the reader with some preliminaries on torsion pairs and (co)silting objects. In Section 3, we briefly recall the key concepts of pure-projectivity and pure-injectivity and we establish the connection between (co)silting objects having such properties and t-structures with  Grothendieck hearts. Finally, in Section 4, we discuss definable subcategories and we prove the above mentioned relation between pure-injective cosilting objects and certain definable subcategories of the underlying triangulated category.

\medskip
\noindent {\bf Acknowledgements.}  
The authors would like to thank Mike Prest for discussions leading to Corollary \ref{definable subcat}. The second named author was supported by a grant within the DAAD P.R.I.M.E. program. The third named author acknowledges support from the Department of Computer Sciences of the University of Verona in the earlier part of this project, as well as from the Engineering and Physical Sciences Research Council of the United Kingdom, grant number EP/N016505/1, in the later part of this project. {Finally, the authors acknowledge funding from the Project ``Ricerca di Base 2015'' of the University of Verona.}


\section{Preliminaries}
\subsection{Setup and notation} Throughout, we denote by $\T$ a \textbf{compactly generated} triangulated category, i.e. a triangulated category with coproducts for which the subcategory of compact objects, denoted by $\T^c$, has only a set of isomorphism classes and such that for any $Y$ in $\T$ with $\Hom_\T(X,Y)=0$ for all $X$ in $\T^c$, we have $Y=0$. Since $\T$ admits arbitrary set-indexed coproducts, it is idempotent complete (see \cite[Proposition 1.6.8]{Neeman}). It is also well-known (see \cite[Proposition 8.4.6 and Theorem 8.3.3]{Neeman}) that such triangulated categories admit products. In some places, we will further assume $\T$ to be \textbf{algebraic}, i.e. $\T$ can be constructed as the stable category of a Frobenius exact category (see \cite{H}). Note that algebraic and compactly generated triangulated categories are essentially derived categories of small differential graded categories (\cite{Keller}).

All subcategories considered are strict and full. For a set of integers $I$ (which is often expressed by symbols such as $>n$, $<n$, $\geq n$, $\leq n$, $\neq n$, or just $n$, with the obvious associated meaning) we define the following orthogonal classes
$${}^{\perp_I}X:=\{Y\in \T:\Hom_\T(Y,X[i])=0, \forall i\in I\}\ \ \ \ {X}^{\perp_I}:=\{Y\in \T:\Hom_\T(X,Y[i])=0, \forall i\in I\}.$$
If $\mathscr{C}$ is a subcategory of $\Tcal$, then we denote by $\Add(\mathscr{C})$ (respectively, $\Prod(\mathscr{C})$) the smallest subcategory of $\T$ containing $\mathscr{C}$ and closed under coproducts (respectively, products) and summands. If $\mathscr{C}$ consists of a single object $M$, we write $\Add(M)$ and $\Prod(M)$ for the respective subcategories. For a ring $A$, we denote by $\mathsf{Mod}(A)$ the category of right $A$-modules and by $\mathsf{D}(A)$ the unbounded derived category of $\mathsf{Mod}(A)$. The subcategories of injective and of projective $A$-modules are denoted, respectively, by $\mathsf{Inj}(A)$ and $\mathsf{Proj}(A)$, and their bounded homotopy categories by $\mathsf{K}^b(\mathsf{Inj(A)})$ and $\mathsf{K}^b(\mathsf{Proj}(A))$, respectively.


\subsection{Torsion pairs}
We consider the notion of a torsion pair in a triangulated category (see, for example, \cite{IY}), which gives rise to the notions of a t-structure (\cite{BBD}) and a co-t-structure (\cite{Bondarko}, \cite{Pauk}).
\begin{definition}\label{def tp}
A pair of subcategories $(\Ucal,\Vcal)$ in $\T$ is said to be a \textbf{torsion pair} if
\begin{enumerate}
\item $\Ucal$ and $\Vcal$ are closed under summands;
\item $\Hom_\T(\Ucal,\Vcal)=0$;
\item For every object $X$ of $\T$, there are $U$ in $\Ucal$, $V$ in $\Vcal$ and a triangle
$$U\longrightarrow X\longrightarrow V\longrightarrow U[1].$$
\end{enumerate}
In a torsion pair $(\Ucal,\Vcal)$, the class $\Ucal$ is called the \textbf{aisle}, the class $\Vcal$ the \textbf{coaisle}, and $(\Ucal,\Vcal)$ is said to be
\begin{itemize}
\item $\textbf{nondegenerate}$ if $\cap_{n\in\mathbb{Z}}\Ucal[n]=0=\cap_{n\in\mathbb{Z}}\Vcal[n]$;
\item a \textbf{t-structure} if $\Ucal[1]\subseteq \Ucal$, in which case we say that $\Ucal[-1]\cap \Vcal$ is the \textbf{heart} of $(\Ucal,\Vcal)$;
\item a \textbf{co-t-structure} if $\Ucal[-1]\subseteq \Ucal$, in which case we say that $\Ucal\cap \Vcal[-1]$ is the \textbf{coheart} of $(\Ucal,\Vcal)$.
\end{itemize}
\end{definition}

It follows from \cite{BBD} that the heart $\Hcal_\mathbb{T}$ of a t-structure $\mathbb{T}:=(\Ucal,\Vcal)$ in $\T$ is an abelian category with the exact structure induced by the triangles of $\T$ lying in $\Hcal_\mathbb{T}$.
Moreover, the triangle in Definition \ref{def tp}(3) can  be expressed functorially as 
$$\xymatrix{u(X)\ar[r]^{\ f} & X\ar[r]^{g \ \ \ }& v(X)\ar[r]& u(X)[1]}$$
where  $u:\T\longrightarrow \Ucal$ is the right adjoint of the inclusion of $\Ucal$ in $\Tcal$ and $v:\T\longrightarrow \Vcal$ is the left adjoint of the inclusion of $\Vcal$ in $\Tcal$. The existence of one of these adjoints, usually called \textbf{truncation functors}, is in fact equivalent to the fact that $(\Ucal,\Vcal)$ is a t-structure (\cite[Proposition 1.1]{KV}). Observe that the maps $f$ and $g$ in the triangle are, respectively, the counit and unit map of the relevant adjunction.
In particular, it follows that if $f=0$ (respectively, $g=0$), then $u(X)=0$ (respectively, $v(X)=0$). 
{Furthermore, $u$ and $v$ give rise to a cohomological functor defined by:}
\[
H^0_\mathbb{T}\colon \T\longrightarrow \mathcal{H}_\mathbb{T}, \ X \ \mapsto \textsf{H}^0_{\mathbb{T}}(X):=v(u(X[1])[-1])=u(v(X)[1])[-1].
\]
 {Recall that an additive covariant functor from $\Tcal$ to an abelian category $\Acal$ is said to be \textbf{cohomological} if it sends triangles in $\T$ to long exact sequences in $\Acal$.}
 
\medskip

We will also be interested in the properties of torsion pairs generated or cogenerated by certain subcategories of $\T$, which are defined as follows.

\begin{definition}
Let $(\Ucal,\Vcal)$ be a torsion pair in $\T$ and $\Acal$ a subcategory of $\T$. We say that $(\Ucal,\Vcal)$ is
\begin{itemize} 
\item \textbf{generated by $\Acal$} if $(\Ucal,\Vcal)=({}^{\perp_0}(\Acal^{\perp_0}),\Acal^{\perp_0})$; 
\item \textbf{cogenerated by $\Acal$} if $(\Ucal,\Vcal)=({}^{\perp_0}\Acal,({}^{\perp_0}\Acal)^{\perp_0})$;
\item \textbf{compactly generated} if $(\Ucal,\Vcal)$ is generated by a set of compact objects.
\end{itemize}
{Moreover, we say that $\Acal$  \textbf{generates} $\T$ if the subcategory $\bigcup_{n\in\mathbb{Z}}\Acal[n]$ generates the torsion pair $(\Tcal,0)$. Dually, we say that $\Acal$ \textbf{cogenerates} $\T$ if the subcategory $\bigcup_{n\in\mathbb{Z}}\Acal[n]$ cogenerates the torsion pair $(0,\Tcal)$.}
\end{definition}

Recall that a subcategory $\Ucal$ of $\T$ is said to be \textbf{suspended} (respectively, \textbf{cosuspended}) if it is closed under extensions and positive (respectively, negative) shifts. For example, a torsion pair $(\Ucal,\Vcal)$ is a t-structure if and only if $\Ucal$ is suspended (or, equivalently, $\Vcal$ is cosuspended). In particular, a t-structure generated (respectively, cogenerated) by a subcategory $\Acal$ is also generated (respectively, cogenerated) by the smallest suspended (respectively, cosuspended) subcategory containing $\Acal$. A dual statement holds for co-t-structures.

\begin{definition}
Two torsion pairs of the form $(\Ucal,\Vcal)$ and $(\Vcal,\Wcal)$ are said to be \textbf{adjacent}. More precisely, we say that $(\Ucal,\Vcal)$ is \textbf{left adjacent} to $(\Vcal,\Wcal)$ and that $(\Vcal,\Wcal)$ is \textbf{right adjacent} to $(\Ucal,\Vcal)$. Such $\Vcal$ is then called a \textbf{TTF (torsion-torsion-free) class} and the triple $(\Ucal,\Vcal,\Wcal)$ is said to be a \textbf{TTF triple}. Moreover, a TTF triple $(\Ucal,\Vcal,\Wcal)$ is said to be \textbf{suspended} (respectively, \textbf{cosuspended}) if the corresponding TTF class is a suspended (respectively, cosuspended) subcategory of $\T$.
\end{definition}

Note that, in a TTF triple, one of the torsion pairs is a t-structure if and only if the adjacent one is a co-t-structure. 

\begin{example}\label{ex TTF}
\begin{enumerate}
\item Let $A$ be a ring and consider its derived category $\mathsf{D}(A)$. Denote by $D^{\leq -1}$ (respectively, $D^{\geq 0}$) the subcategory of $\mathsf{D}(A)$ formed by the complexes whose usual complex cohomology vanishes in all non-negative degrees (respectively, in all negative degrees). The pair $(D^{\leq -1},D^{\geq 0})$ is a nondegenerate t-structure in $\mathsf{D}(A)$, called the \textbf{standard t-structure}. We note that the standard t-structure admits both a left and a right adjacent co-t-structure. We refer to \cite[Example 2.9(2)]{AMV1} for details on the left adjacent co-t-structure. Analogously, the right adjacent co-t-structure is the pair $(D^{\geq 0},K_{\leq -1})$ where $K_{\leq -1}$ stands for the subcategory of objects in $\mathsf{D}(A)$ which are isomorphic to a complex $X^\bullet$ of injective $A$-modules such that $X^i=0$ for all $i\geq 0$. The triple $(D^{\leq -1},D^{\geq 0},K_{\leq -1})$ is then a cosuspended TTF triple. Clearly, the heart of $(D^{
 \leq -1},D^{\geq 0})$ is $\mathsf{Mod}(A)$ and the coheart of $(D^{\geq 0},K_{\leq -1})$ coincides with $\mathsf{Inj}(A)$.
\item It follows from {\cite[Theorem 4.3]{AI}} that if $\Acal$ is a set of compact objects, then the pair $({}^{\perp_0}(\Acal^{\perp_0}),\Acal^{\perp_0})$ is a torsion pair. If $\T$ is moreover an algebraic triangulated category, then such a pair admits a right adjacent torsion pair, as shown in \cite[Theorem 3.11]{StPo}. In this case, if $\Acal$ is a suspended (respectively, cosuspended) subcategory of $\T^c$, then the triple $({}^{\perp_0}(\Acal^{\perp_0}),\Acal^{\perp_0},(\Acal^{\perp_0})^{\perp_0})$ is a cosuspended (respectively, suspended) TTF triple. We investigate some properties of the heart of compactly generated cosuspended TTF triples in Section 4.
\item Following the arguments in \cite[Proposition 1.4]{Nee}, we have that if $\Vcal$ is a cosuspended and preenveloping (respectively, suspended and precovering) subcategory of $\T$, then the inclusion of $\Vcal$ in $\T$ has a left (respectively, right) adjoint. In particular, there is a t-structure $(\Ucal,\Vcal)$ (respectively, a t-structure $(\Vcal,\Wcal)$) in $\T$. 
In our context, this shows that a co-t-structure $(\Vcal,\Wcal)$ has a left (respectively, right) adjacent t-structure if and only if $\Vcal$ is preenveloping (respectively, $\Wcal$ is precovering).
\end{enumerate}
\end{example}


\subsection{(Co)Silting}\label{(Co)silting} Recall the definition of silting and cosilting objects in a triangulated category (see \cite{PV}).
\begin{definition}
An object $M$ in $\T$ is called:
\begin{itemize}
\item \textbf{silting} if $(M^{\perp_{>0}},M^{\perp_{\leq 0}})$ is a t-structure in $\T$ and $M\in M^{\perp_{>0}}$;
\item \textbf{cosilting} if $({}^{\perp_{\leq 0}}M,{}^{\perp_{>0}}M)$ is a t-structure in $\T$ and $M\in{}^{\perp_{>0}}M$.
\end{itemize}
We say that two silting (respectively, cosilting) objects are \textbf{equivalent}, if they give rise to the same t-structure in $\T$ and
we call such a t-structure \textbf{silting} (respectively, \textbf{cosilting}).
The heart of the t-structure associated to a silting or cosilting object $M$ is denoted by $\Hcal_M$ and the cohomological functor $\T\longrightarrow \Hcal_M$ by $H^0_M$.
\end{definition}

It follows from the definition that silting and cosilting t-structures are nondegenerate and that a silting (respectively, cosilting) object generates (respectively, cogenerates) the triangulated category $\T$ (see \cite{PV}). 

\begin{example}\label{example (co)silting}
Let $A$ be a ring and $\mathsf{D}(A)$ its derived category.
\begin{enumerate}
\item Let $E$ be an injective cogenerator of $\mathsf{Mod}(A)$. Regarded as an object in $\mathsf{D}(A)$, $E$ is a cosilting object and the associated cosilting t-structure is the standard one. As discussed in Example \ref{ex TTF}(1), there is also a right adjacent co-t-structure with coheart $\Prod(E)=\mathsf{Inj}(A)$.
\item It follows from \cite[Theorem 4.6]{AMV1} that a silting object $T$ of $\mathsf{D}(A)$ lying in $\mathsf{K}^b(\mathsf{Proj}(A))$ gives rise to a suspended TTF triple, that is, the silting t-structure $(T^{\perp_{>0}},T^{\perp_{\leq 0}})$ admits a left adjacent co-t-structure with coheart $\Add(T)$ (see also \cite{Wei}). 
Dually, a cosilting object $C$ of $\mathsf{D}(A)$ lying in $\mathsf{K}^b(\mathsf{Inj}(A))$ gives rise to a cosuspended TTF triple, that is, the cosilting t-structure $({}^{\perp_{\leq 0}}C,{}^{\perp_{> 0}}C)$ admits a right adjacent co-t-structure with coheart $\Prod(C)$. 
For this dual statement, we refer to forthcoming work in \cite{MV2}.
\end{enumerate}
\end{example}

Silting and cosilting objects produce hearts with particularly interesting homological properties. First, recall from \cite{PaSa} that hearts of t-structures in a triangulated category with products and coproducts also have products and coproducts. Indeed, the (co)product of a family of objects in the heart is obtained by applying the functor $H^0_\mathbb{T}$ to the corresponding (co)product of the same family in $\T$. Of course, this (co)product in the heart may differ from the (co)product formed in $\T$.
\begin{lemma}\cite[Proposition 4.3]{PV}\label{properties heart} Let $M$ be a silting (respectively, cosilting) object in $\T$. Then the heart $\Hcal_M$ is an abelian category with a projective generator (respectively, an injective cogenerator) given by $H_M^0(M)$.
\end{lemma}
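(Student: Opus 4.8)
The plan is to treat the silting case in detail and obtain the cosilting case by dualising (replacing coproducts by products, the representable $\Hom_\T(M,-)$ by $\Hom_\T(-,M)$, and generation of $\T$ by cogeneration). Write $(\Ucal,\Vcal)=(M^{\perp_{>0}},M^{\perp_{\le 0}})$ for the silting t-structure, with heart $\Hcal_M$. By \cite{BBD} the heart is abelian, and a sequence $0\to A\to B\to C\to 0$ of objects of $\Hcal_M$ is short exact precisely when it underlies a triangle $A\to B\to C\to A[1]$ in $\T$ with all three terms in $\Hcal_M$; this is the only input we need about the abelian structure. It therefore suffices to prove that $G:=H^0_M(M)$ is projective and a generator.

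The technical core is a natural isomorphism
\[
\Hom_{\Hcal_M}(G,N)\;\cong\;\Hom_\T(M,N),\qquad N\in\Hcal_M,
\]
understood up to the shift fixed by the chosen normalisation of the heart. To establish it, I would feed the truncation triangle $u(M)\to M\to v(M)\to u(M)[1]$ associated to $(\Ucal,\Vcal)$ into $\Hom_\T(-,N)$ and use $\Hom_\T(\Ucal,\Vcal)=0$ together with the cosuspension of $\Vcal$ to kill the $u(M)$-terms, identifying $\Hom_\T(M,N)$ with $\Hom_\T(H^0_M(M),N)$; since $\Hcal_M$ is a full subcategory of $\T$, the latter equals $\Hom_{\Hcal_M}(G,N)$. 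Pinning down $H^0_M(M)$ as the correct object of $\Hcal_M$ and checking naturality in $N$ is the step that requires the most care, and it is here that the defining orthogonality of the (co)silting t-structure is really used; this is the main obstacle.

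Granting the isomorphism, projectivity is immediate: applying the cohomological functor $\Hom_\T(M,-)$ to a triangle $A\to B\to C\to A[1]$ arising from a short exact sequence in $\Hcal_M$ yields a long exact sequence, and the neighbouring terms $\Hom_\T(M,C[-1])$ and $\Hom_\T(M,A[1])$ vanish, since for $X\in\Hcal_M$ the graded group $\Hom_\T(M,X[\ast])$ is concentrated in a single degree (exactly the defining orthogonality of the silting t-structure). Hence $\Hom_{\Hcal_M}(G,-)\cong\Hom_\T(M,-)|_{\Hcal_M}$ is exact, so $G$ is projective.

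For generation, recall that $\Hcal_M$ has coproducts, computed by applying $H^0_M$ to coproducts in $\T$. Given $N\in\Hcal_M$, I would consider the canonical evaluation morphism $G^{(\Hom_{\Hcal_M}(G,N))}\to N$ and let $C$ be its cokernel in $\Hcal_M$; since $G$ is projective, $\Hom_{\Hcal_M}(G,-)$ is exact and sends this morphism to a surjection, whence $\Hom_{\Hcal_M}(G,C)=0$. By the isomorphism this forces $\Hom_\T(M,C)=0$, and combined with the heart vanishing $\Hom_\T(M,C[i])=0$ for $i\neq 0$ we obtain $\Hom_\T(M,C[i])=0$ for all $i\in\Z$. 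As a silting object $M$ generates $\T$, so $C=0$ and the evaluation map is an epimorphism; thus $G$ is a projective generator. The cosilting statement follows by the dual argument, yielding that $H^0_M(M)$ is injective (exactness of $\Hom_\T(-,M)|_{\Hcal_M}$) and a cogenerator, since an object $K$ with $\Hom_{\Hcal_M}(K,H^0_M(M))=0$ satisfies $\Hom_\T(K,M[i])=0$ for all $i$ and hence vanishes because $M$ cogenerates $\T$.
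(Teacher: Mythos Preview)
The paper does not prove this lemma; it is quoted without argument from \cite[Proposition 4.3]{PV}, so there is no proof in the paper to compare against. Your overall strategy --- establish a natural isomorphism between $\Hom_{\Hcal_M}(H^0_M(M),-)$ on the heart and (a shift of) $\Hom_\T(M,-)$, deduce projectivity from the cohomological nature of $\Hom_\T(M,-)$ together with the orthogonality defining the heart, and deduce generation from the fact that a silting object generates $\T$ --- is exactly the standard one and is correct in outline.

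The one place your sketch needs genuine repair is precisely the step you flag as delicate. In the paper's conventions $M$ lies in the aisle $\Ucal=M^{\perp_{>0}}$, so the truncation triangle $u(M)\to M\to v(M)\to u(M)[1]$ you propose is degenerate: $u(M)=M$ and $v(M)=0$. Applying $\Hom_\T(-,N)$ with $N\in\Hcal_M\subseteq\Vcal$ merely recovers the orthogonality $\Hom_\T(M,N)=0$, which is true but vacuous. The shift you allude to is therefore not a harmless normalisation but the crux of the matter: since $\Hcal_M=\Ucal[-1]\cap\Vcal$, for $N$ in the heart the only possibly nonzero group is $\Hom_\T(M,N[1])$, and the isomorphism one actually needs is $\Hom_{\Hcal_M}(H^0_M(M),N)\cong\Hom_\T(M,N[1])$. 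Obtaining it requires the truncation triangle of a \emph{shift} of $M$, not of $M$ itself (compare the triangle $v(X_i[1])[-2]\to H^0_C(X_i)\to X_i\to v(X_i[1])[-1]$ used in the proof of Lemma~\ref{Add and Prod}, where it is $X_i[1]$ that is truncated). Once this is set up with the correct shift, your projectivity and generator arguments go through verbatim after replacing $\Hom_\T(M,N)$ by $\Hom_\T(M,N[1])$ throughout.
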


The following lemma establishes a particularly nice behaviour of the cohomological functors arising from (co)silting t-structures with respect to products and coproducts.

\begin{lemma}\label{Add and Prod} 
If $T$ is a silting object in $\T$, then the functor $H^0_T$ induces an equivalence between $\Add_\Tcal(T)$ and $\Add_{\Hcal_T}(H^0_T(T))=\mathsf{Proj}(\Hcal_T)$. Dually, if $C$ is a cosilting object in $\T$, then the functor $H^0_C$ induces an equivalence between $\Prod_\Tcal(C)$ and $\Prod_{\Hcal_C}(H^0_C(C))=\mathsf{Inj}(\Hcal_C)$.
\end{lemma}
\begin{proof}
We prove the statement for a cosilting object $C$ in $\T$ (the silting case is shown dually). Let the truncation functors of the associated cosilting t-structure $({}^{\perp_{\leq 0}}C,{}^{\perp_{>0}}C)$ be denoted by $u:\Tcal\longrightarrow {}^{\perp_{\leq 0}}C$ and $v:\Tcal\longrightarrow {}^{\perp_{>0}}C$. Recall from \cite[Lemma 4.5(iii)]{PV} that ${\Prod_\Tcal}(C)={}^{\perp_{>0}}C\cap ({}^{\perp_{>0}}C[-1])^{\perp_0}$. We first show that $H^0_C$ is fully faithful on $\Prod_\Tcal(C)$ (compare with \cite[Lemma 5.1(d)]{KV}, \cite[Lemma 1.3]{AST} and \cite[Lemma 3.2]{NSZ}). Let $X_1$ and $X_2$ be objects in $\Prod_{\T}(C)$. Suppose that $f:X_1\longrightarrow X_2$ is a map in $\T$ such that {$H^0_C(f)=0$}. Since, by assumption, $X_i$ (i=1,2) lies in ${}^{\perp_{>0}}C$, there is a truncation triangle of the form
$$\xymatrix{v(X_i[1])[-2]\ar[r]^{\ \ \ \ \ \kappa_i}&H^0_C(X_i)\ar[r]^{\ \ \mu_i}&X_i\ar[r]& v(X_i[1])[-1]}.$$
Now $f$ induces a morphism of triangles and, in particular, we have that $0=\mu_2{H^0_C}(f)=f\mu_1$. Thus, $f$ factors through $v(X_1[1])[-1]$. However, since $X_2$ lies in $({}^{\perp_{>0}}C[-1])^{\perp_0}$, we have that $\Hom_\Tcal(v(X_1[1])[-1],X_2)=0$ and, therefore, $f=0$. Now let us show that ${H^0_C}$ is also full on ${\Prod_\Tcal}(C)$. Suppose that $g$ is a map in $\Hom_\Tcal(H^0_C(X_1),H^0_C(X_2))$. Since $X_2$ lies in $({}^{\perp_{>0}}C[-1])^{\perp_0}$, the composition $\mu_2g\kappa_1$ vanishes and, therefore, there is a map $\tilde{g}:X_1\longrightarrow X_2$ such that $\tilde{g}\mu_1=\mu_2g$. Therefore, $g$ extends to a morphism of triangles and, as a consequence, $g=H^0_C(\tilde{g})$.

It remains to show that the essential image of $H_C^0$ restricted to $\Prod_\T(C)$ coincides with $\Prod_{\Hcal_C}(H^0_C(C))$. Observe first that $H_C^0(\prod_{i\in I}X_i)=\prod_{i\in I}H^0_C(X_i)$ for every family $(X_i)_{i\in I}$ of objects in $\mathsf{Prod}_\Tcal(C)$, where the product of the family $(H^0_C(X_i))_{i\in I}$ is taken in $\Hcal_C$. The proof is dual to  the argument for silting objects in \cite[Lemma 3.2.2(a)]{NSZ}. Take now  an object $M$ in $\mathsf{Prod}_{\Hcal_C}(H^0_C(C))$ and let $N$ be an object in $\Hcal_C$ such that $M\oplus N=H^0_C(C)^{I}$ for some set $I$. Then there is an idempotent element $e_M$ in $\mathsf{End}_{\Hcal_C}(H^0_C(C)^{I})=\mathsf{End}_{\Hcal_C}(H^0_C(C^{I}))$ whose image is the summand $M$. Since $H^0_C$ is fully faithful on ${\mathsf{Prod}_\Tcal}(C)$, it follows that there is an idempotent element $e$ in $\mathsf{End}_{\T}(C^{I})$ such that $H_C^0(e)=e_M$. Given that $\T$ is idempotent complete, the map $e$ factors as $C^I\stackrel{f}{\to}X\stackrel{g}{\to} C^I$ such that $fg={\rm id}_X$, and it then follows that ${H^0_C}(X)=M$.
\end{proof}

We finish this section with a general observation on abelian categories that will be useful later.

\begin{lemma}\label{extend injectives}
Let $\Acal$ and $\Bcal$ be abelian categories with enough injective (respectively, projective) objects and let $F:\Acal\longrightarrow \Bcal$ be a left (respectively, right) exact functor yielding an equivalence $\mathsf{Inj}(\Acal)\longrightarrow \mathsf{Inj}(\Bcal)$ (respectively, $\mathsf{Proj}(\Acal)\longrightarrow \mathsf{Proj}(\Bcal)$). Then $F$ is an equivalence of abelian categories.
\end{lemma}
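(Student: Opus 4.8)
The plan is to treat the left-exact/injective case; the right-exact/projective case then follows by dualising, that is, by passing to the opposite categories $\Acal^{\mathrm{op}}$, $\Bcal^{\mathrm{op}}$, where $F$ becomes a right-exact functor inducing an equivalence on projectives. I would show separately that $F$ is fully faithful and essentially surjective, exploiting that $F$, being left exact, preserves injective copresentations, and that $F$ restricts to an equivalence $\mathsf{Inj}(\Acal)\to\mathsf{Inj}(\Bcal)$. The one genuinely new input to extract is the following comparison: for every object $A$ of $\Acal$ and every injective $I$ of $\Acal$, the map $\Hom_\Acal(A,I)\to\Hom_\Bcal(F(A),F(I))$ induced by $F$ is bijective.

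To establish this comparison, fix an injective copresentation $0\to A\to J^0\xrightarrow{d} J^1$ of $A$, which exists since $\Acal$ has enough injectives. Applying the exact functor $\Hom_\Acal(-,I)$ (exact because $I$ is injective) presents $\Hom_\Acal(A,I)$ as the cokernel of $\Hom_\Acal(J^1,I)\to\Hom_\Acal(J^0,I)$. Since $F$ is left exact, $0\to F(A)\to F(J^0)\to F(J^1)$ is again an injective copresentation in $\Bcal$, and since $F(I)$ is injective the same argument presents $\Hom_\Bcal(F(A),F(I))$ as the cokernel of $\Hom_\Bcal(F(J^1),F(I))\to\Hom_\Bcal(F(J^0),F(I))$. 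As $F$ is fully faithful on injectives, the two presenting maps are identified by $F$, so passing to cokernels yields the desired bijection, which is exactly the map induced by $F$. With the comparison in hand, full faithfulness for arbitrary $A,A'$ follows by the same trick one dimension up: choosing an injective copresentation $0\to A'\to I^0\to I^1$ of $A'$ and applying the left-exact functors $\Hom_\Acal(A,-)$ and $\Hom_\Bcal(F(A),-)$ presents $\Hom_\Acal(A,A')$ and $\Hom_\Bcal(F(A),F(A'))$ as the respective kernels of maps between injective $\Hom$-groups already matched by the comparison; hence $F\colon\Hom_\Acal(A,A')\to\Hom_\Bcal(F(A),F(A'))$ is bijective.

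For essential surjectivity, take $B$ in $\Bcal$ and an injective copresentation $0\to B\to E^0\xrightarrow{\delta} E^1$, available since $\Bcal$ has enough injectives. As $F$ restricts to an equivalence on injectives, I can write $E^k\cong F(I^k)$ with $I^k$ injective in $\Acal$, and lift $\delta$ through the fully faithful $F$ to a morphism $d\colon I^0\to I^1$ with $F(d)\cong\delta$. Setting $A:=\Ker(d)$ and using left-exactness of $F$ gives $F(A)=\Ker(F(d))\cong\Ker(\delta)\cong B$, so $B$ lies in the essential image; combined with full faithfulness, this shows $F$ is an equivalence. The main point requiring care is naturality: one must check that all the $\Hom$-group isomorphisms produced are precisely the maps induced by $F$ and are compatible with the connecting morphisms in the copresentations, so that the passages to kernels and cokernels are legitimate. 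Once this bookkeeping is in place, every step is routine, the only structural inputs being the left-exactness of $F$ and the equivalence on injectives.
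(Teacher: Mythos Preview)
Your proof is correct, but it proceeds along a different path from the paper's. The paper argues structurally: since $\Acal$ and $\Bcal$ have enough injectives, each is equivalent (via the kernel functor) to the quotient of the morphism category $\mathsf{Mor}(\mathsf{Inj}(-))$ by the natural relations $\Rcal_{-}$, citing \cite[Proposition IV.1.2]{ARS} for the dual statement with projectives. The equivalence $F\colon\mathsf{Inj}(\Acal)\to\mathsf{Inj}(\Bcal)$ then induces an equivalence of the morphism categories, and left exactness of $F$ ensures this descends compatibly to the quotients, so $F$ itself is an equivalence. Your argument, by contrast, works directly: you first prove the intermediate comparison $\Hom_\Acal(A,I)\cong\Hom_\Bcal(F(A),F(I))$ for $I$ injective by presenting both sides as cokernels, then bootstrap to full faithfulness by presenting general $\Hom$-groups as kernels, and finally obtain essential surjectivity by lifting an injective copresentation of $B$ through $F$ and taking the kernel. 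Your route is more elementary and self-contained, avoiding the appeal to the reconstruction of an abelian category from its injectives; the paper's route is shorter once that reconstruction is granted, and makes the conceptual reason for the result (that $\Acal$ is determined by $\mathsf{Inj}(\Acal)$) more visible. Both ultimately rest on the same two ingredients you identified: left exactness of $F$ and the equivalence on injectives.
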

\begin{proof}
Suppose that $\Acal$ and $\Bcal$ have enough injective objects. Then both categories can be recovered as factor categories of the corresponding categories $\mathsf{Mor}(\mathsf{Inj}(\Acal))$ and $\mathsf{Mor}(\mathsf{Inj}(\Bcal))$ of morphisms between injectives. Indeed, the kernel functors induce equivalences $\Ker_\Acal:\mathsf{Mor}(\mathsf{Inj}(\Acal))/\Rcal_\Acal\longrightarrow \Acal$ and $\Ker_\Bcal:\mathsf{Mor}(\mathsf{Inj}(\Acal))/\Rcal_\Bcal\longrightarrow \Bcal$, where the relations $\Rcal_\Acal$ and $\Rcal_\Bcal$ are the obvious ones (compare with \cite[Proposition IV.1.2]{ARS} for the case of projectives). Since $F$ induces an equivalence between $\mathsf{Inj}(\Acal)$ and $\mathsf{Inj}(\Bcal)$, it clearly also induces an equivalence between the corresponding morphism categories and, moreover, since $F$ is left exact, it indeed defines an equivalence $\widetilde{F}:\mathsf{Mor}(\mathsf{Inj}(\Acal))/\Rcal_\Acal\longrightarrow \mathsf{Mor}(\mathsf{Inj}(\Bcal))/\Rcal_\Bcal$ such that $Ker_\Bcal\circ \widetilde{F}=F\circ \Ker_\Acal$. Hence, $F$ is an equivalence. The dual statement follows analogously.
\end{proof}


\section{Grothendieck hearts in compactly generated triangulated categories}
Recall that a Grothendieck category is an abelian category with coproducts, exact direct limits and a generator. It is well-known that Grothendieck categories have enough injective objects and every object admits an injective envelope.
This section is dedicated to the question of determining when hearts of silting and cosilting t-structures are Grothendieck categories. We answer this question using a suitable category of functors and a corresponding theory of purity. 
We begin this section with a quick reminder of the relevant concepts.
\subsection{Functors and purity}
We consider the category $\Modr\Tcal^c$ of contravariant additive functors from $\T^c$ to $\mathsf{Mod}(\mathbb{Z})$, which is known to be a locally coherent Grothendieck category (see \cite{Kr0}, \cite[Subsection 1.2]{Kr1}). 

Consider the {restricted Yoneda functor} 
$$\mathbf{y}:\Tcal\longrightarrow \Modr\Tcal^c, \ \ \ \ \ \mathbf{y}X=\Hom_\Tcal(-,X)_{|\Tcal^c},\ \forall X\in \Tcal.$$
It is well-known that $\mathbf{y}$ is not, in general, fully faithful. 
A triangle 
$$\Delta:\ \ \ \xymatrix{X\ar[r]^f&Y\ar[r]^g&Z\ar[r]&X[1]}$$ 
in $\T$ is said to be a \textbf{pure triangle} if $\mathbf{y}\Delta$ is a short exact sequence. In other words, the triangle $\Delta$ is pure, if for any compact object $K$ in $\T$, the sequence
$$\xymatrix{0\ar[r]&\Hom_{\T}(K,X)\ar[rr]^{\Hom_\T(K,f)}&&\Hom_\T(K,Y)\ar[rr]^{\Hom_\T(K,g)}&&\Hom_\T(K,Z)\ar[r]&0}$$
is exact. We say that a morphism $f:X\rightarrow Y$ in $\T$ is a \textbf{pure monomorphism} (respectively, a \textbf{pure epimorphism}) if $\mathbf{y}f$ is a monomorphism (respectively, an epimorphism) in $\Modr\T^c$. An object $E$ of $\T$ is said to be \textbf{pure-injective} if any pure monomorphism $f:E\rightarrow Y$ in $\Tcal$ splits. Similarly, an object $P$ is said to be \textbf{pure-projective} in $\T$ if any pure epimorphism $g:X\rightarrow P$ splits.

The following theorem collects useful properties of pure-injective and pure-projective objects.
\begin{theorem}\cite[Theorem 1.8, Corollary 1.9]{Kr1}\cite[Section 11]{Bel}\label{Krause}
The following statements are equivalent for an object $E$ in $\Tcal$.
\begin{enumerate}
\item $E$ is pure-injective;
\item $\mathbf{y}E$ is an injective object in $\Modr\T^c$;
\item The map $\Hom_\Tcal(X,E)\rightarrow \Hom_{\Modr\Tcal^c}(\mathbf{y}{X},\mathbf{y}{E})$, {$\phi\mapsto\mathbf{y}\phi$} is an isomorphism for any {object} $X$ in $\Tcal$;
\item For every set $I$, the summation map $E^{(I)}\rightarrow E$ factors through the canonical map $E^{(I)}\rightarrow E^I$.
\end{enumerate}
Dually, the following are equivalent for an object $P$ in $\Tcal$.
\begin{enumerate}
\item $P$ is pure-projective;
\item $\mathbf{y}P$ is a projective object in $\Modr\T^c$;
\item The map $\Hom_\Tcal(P,Y)\rightarrow \Hom_{\Modr\Tcal^c}(\mathbf{y}{P},\mathbf{y}{Y})$, {$\phi\mapsto\mathbf{y}\phi$} is an isomorphism for any {object} $Y$ in $\Tcal$;
\item $P$ lies in $\Add(\T^c)$.
\end{enumerate}
Moreover, any projective (respectively, injective) object in $\Modr\Tcal^c$ is of the form $\mathbf{y}P$ (respectively, $\mathbf{y}E$), for a pure-projective object $P$ (respectively, a pure-injective object $E$), uniquely determined up to isomorphism.
\end{theorem}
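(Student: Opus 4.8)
The plan is to exploit the fact that $\mathbf{y}$ restricted to $\Tcal^c$ is, by the Yoneda lemma, a fully faithful embedding identifying $\{\mathbf{y}K : K\in\Tcal^c\}$ with the finitely generated projective generators of $\Modr\Tcal^c$, and that $\mathbf{y}$ preserves coproducts (because each $\Hom_\Tcal(K,-)$ with $K$ compact does) and products (because every $\Hom_\Tcal(K,-)$ does), while sending triangles to long exact sequences. These are the only structural inputs I would use; everything else is formal manipulation inside the locally coherent Grothendieck category $\Modr\Tcal^c$, which has enough projectives (coproducts of representables) and enough injectives.

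For the pure-projective equivalences I would first treat the compact case: for $K\in\Tcal^c$ the functor $\mathbf{y}K$ is representable, hence projective, so any pure epimorphism $g\colon X\to K$ has $\mathbf{y}g$ a split epimorphism onto a projective. Lifting the splitting through the Yoneda isomorphism $\Hom_{\Modr\Tcal^c}(\mathbf{y}K,\mathbf{y}X)\cong\Hom_\Tcal(K,X)$ and using that $\mathbf{y}$ is faithful on maps out of compacts shows $g$ splits, so $K$ is pure-projective. Since $\mathbf{y}$ preserves coproducts and summands this propagates to all of $\Add(\Tcal^c)$, giving $(4)\Rightarrow(1)$ and $(4)\Rightarrow(2)$, while additivity of the Yoneda isomorphism gives $(4)\Rightarrow(3)$. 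Conversely, for an arbitrary $P$ I would choose a generating epimorphism $\coprod_j\mathbf{y}K_j\twoheadrightarrow\mathbf{y}P$ and lift it componentwise via Yoneda to a map $g\colon\coprod_j K_j\to P$ with $\mathbf{y}g$ epi, so that $g$ is a pure epimorphism from an object of $\Add(\Tcal^c)$; if $P$ is pure-projective this splits and $P\in\Add(\Tcal^c)$, yielding $(1)\Rightarrow(4)$, and if $\mathbf{y}P$ is projective the same splitting can be transported back to $\Tcal$ to give $(2)\Rightarrow(1)$. The projective half of the final sentence then follows: a projective $Q$ is a summand of some $\mathbf{y}(\coprod_j K_j)$, the corresponding idempotent of the endomorphism ring of $\mathbf{y}(\coprod_j K_j)$ in $\Modr\Tcal^c$ descends along the full and faithful $\mathbf{y}$ on $\Add(\Tcal^c)$ to an idempotent of $\End_\Tcal(\coprod_j K_j)$, which splits since $\Tcal$ is idempotent complete, and uniqueness follows again from full faithfulness.

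The pure-injective equivalences I would obtain by dualizing: replacing coproducts by products, projective presentations by embeddings into injective objects, and using that $\Modr\Tcal^c$ is Grothendieck so that injective envelopes exist. Here criterion $(4)$ is the triangulated analogue of the classical characterization of algebraically compact modules, and I would prove it by reformulating pure-injectivity of $E$ as the requirement that, for every set $I$, the summation map $E^{(I)}\to E$ factor through the canonical map $E^{(I)}\to E^{I}$, an identity that encodes the splitting of a suitable canonical pure monomorphism.

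The step I expect to be the genuine obstacle is the injective half of the last sentence, namely that every injective object of $\Modr\Tcal^c$ is of the form $\mathbf{y}E$ for a pure-injective $E$, unique up to isomorphism. Unlike the projective case, injectives in $\Modr\Tcal^c$ are not visibly built from representables, so no purely formal descent is available; this is precisely the non-formal content of the theory, resting on the existence of pure-injective envelopes in $\Tcal$ and on $\mathbf{y}$ inducing an equivalence between the pure-injective objects of $\Tcal$ and the injective objects of $\Modr\Tcal^c$. I would treat this as the cited input from \cite{Kr1} and \cite{Bel} rather than reprove it.
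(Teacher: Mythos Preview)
The paper does not supply its own proof of this theorem: it is stated with the attributions \cite[Theorem 1.8, Corollary 1.9]{Kr1} and \cite[Section 11]{Bel} and used thereafter as a black box, so there is no in-paper argument to compare your proposal against. Your outline follows the standard route from those references, and you correctly isolate the one genuinely non-formal ingredient, namely that every injective of $\Modr\Tcal^c$ is of the form $\mathbf{y}E$; the paper likewise treats this as imported from Krause and Beligiannis.

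One point in your sketch deserves tightening. In the implication $(2)\Rightarrow(1)$ for the pure-projective block you write that the splitting of $\mathbf{y}g$ ``can be transported back to $\Tcal$'', but at that stage you have not yet established condition $(3)$ for $P$, so you cannot directly lift an arbitrary map $\mathbf{y}P\to\mathbf{y}X$ to a map $P\to X$. The clean way around this is to prove $(2)\Rightarrow(4)$ instead: since $\mathbf{y}P$ is projective it is a summand of a coproduct $\mathbf{y}\bigl(\coprod_j K_j\bigr)$, and because $(4)\Rightarrow(3)$ already gives full faithfulness of $\mathbf{y}$ on $\Add(\Tcal^c)$, the corresponding idempotent descends to $\Tcal$ and splits by idempotent completeness, yielding an object $P'\in\Add(\Tcal^c)$ with $\mathbf{y}P'\cong\mathbf{y}P$. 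One then uses the pure epimorphism $\coprod_j K_j\to P$ you already built, together with $(3)$ for $P'$, to identify $P$ with $P'$; this closes the loop without circularity. The remaining implications, and the dual pure-injective statements modulo the cited input, are as you describe.
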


It follows from above that $\T$ has enough pure-injective objects and that every object $X$ in $\T$ admits a pure-injective envelope.
The following theorem collects two results that will become essential later on.

\begin{theorem}\label{lift functor}\label{BelKr}
Let $H:\T\rightarrow \Acal$ be a cohomological functor from $\T$ to an abelian category $\Acal$.
\begin{enumerate}
\item \cite[Theorem 3.4]{Bel} If $H$ sends pure triangles in $\T$ to short exact sequences in $\Acal$, then there is a unique exact functor $\overline{H}:\Modr\T^c\rightarrow\Acal$ such that $\overline{H}\circ \mathbf{y}=H$.
\item \cite[Corollary 2.5]{Kr1} If $\Acal$ has exact direct limits and $H$ preserves coproducts, then $H$ sends pure triangles in $\T$ to short exact sequences in $\Acal$.
\end{enumerate}
\end{theorem}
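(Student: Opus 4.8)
The two assertions are logically independent, so I would treat them separately, using throughout the description of the projective objects of $\Modr\T^c$ provided by Theorem \ref{Krause}. For (1), the factorisation $\overline{H}\circ\mathbf{y}=H$ dictates the values of $\overline{H}$ on projectives: by Theorem \ref{Krause} the projectives of $\Modr\T^c$ are exactly the objects $\mathbf{y}P$ with $P$ pure-projective, and $\mathbf{y}$ is fully faithful on pure-projectives, so one is forced to set $\overline{H}(\mathbf{y}P):=H(P)$ and $\overline{H}(\mathbf{y}\phi):=H(\phi)$. Since $\Modr\T^c$ has enough projectives, I would extend this additive assignment by the standard recipe for a right exact functor: for $F$ with a projective presentation $\mathbf{y}P_1\xrightarrow{\mathbf{y}\phi}\mathbf{y}P_0\to F\to 0$, put $\overline{H}(F):=\Coker\big(H(\phi)\big)$. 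Independence of the presentation, functoriality and right exactness then follow from the comparison theorem for projective resolutions, and uniqueness is automatic: any exact functor restricting to $H$ along $\mathbf{y}$ must agree with the above cokernel formula on every $F$.

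The substance of (1) is that this right exact $\overline{H}$ is in fact exact, and this is the only point at which the hypothesis on pure triangles enters. The mechanism I would exploit is to compute $\overline{H}$ on objects $\mathbf{y}X$, $X\in\T$, through a pure-projective resolution built from pure triangles: iterating pure epimorphisms from pure-projectives yields pure triangles $X_{i+1}\to P_i\to X_i\to X_{i+1}[1]$ whose images under $\mathbf{y}$ are short exact, and splicing them produces a projective resolution $\cdots\to\mathbf{y}P_1\to\mathbf{y}P_0\to\mathbf{y}X\to 0$. By hypothesis $H$ sends each such pure triangle to a short exact sequence $0\to H(X_{i+1})\to H(P_i)\to H(X_i)\to 0$, so $H$ carries the whole resolution to an acyclic complex; this both verifies $\overline{H}(\mathbf{y}X)=H(X)$ for arbitrary $X$ and shows that the higher left derived functors of $\overline{H}$ vanish on every object in the image of $\mathbf{y}$. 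The main obstacle is to propagate this vanishing from the image of $\mathbf{y}$ ---which already contains all the projectives--- to an arbitrary $F$: I would run a dimension-shifting argument over a projective presentation of $F$, analysing the kernel and cokernel of $H(\phi)$ and of the induced maps on syzygies, to force $L_{\ge 1}\overline{H}=0$. Establishing that the $H$-image of a projective resolution is acyclic for \emph{every} $F$, and not merely for those $F$ lying in the image of $\mathbf{y}$, is the technical heart of the argument.

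For (2) I would invoke the structural fact that every pure triangle arises as a homotopy colimit of a direct (filtered) system of split triangles $\Delta_i\colon X_i\to Y_i\to Z_i\to X_i[1]$. An additive cohomological functor sends a split triangle to a split short exact sequence, so each $H(\Delta_i)$ is short exact in $\Acal$. Since $H$ preserves coproducts and sends triangles to long exact sequences, it carries the homotopy colimit presenting $\Delta$ to the corresponding filtered colimit in $\Acal$, giving $H(\Delta)=\varinjlim H(\Delta_i)$; as $\Acal$ has exact direct limits, this filtered colimit of short exact sequences is again short exact. Thus $H$ sends pure triangles to short exact sequences, which completes (2) and, in applications, supplies precisely the pure-triangle hypothesis needed to feed (1).
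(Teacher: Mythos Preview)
This theorem is quoted from \cite{Bel} and \cite{Kr1}; the paper itself gives no proof. It only recalls Beligiannis's construction of $\overline{H}$ via an \emph{injective} copresentation $0\to F\to\mathbf{y}E_0\xrightarrow{\mathbf{y}\alpha}\mathbf{y}E_1$ (with $E_0,E_1$ pure-injective), setting $\overline{H}(F):=\Ker H(\alpha)$, and then remarks that the functor ``can also be obtained in a dual way by taking a projective presentation of $F$''. Your construction in (1) is precisely that dual, so at the level of definitions you and the paper agree.

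The step you yourself flag as ``the technical heart'' --- exactness of $\overline{H}$ on all of $\Modr\T^c$ --- is where your sketch is genuinely incomplete. Plain dimension-shifting does not work as stated, because the syzygy of an arbitrary $F$ need not lie in the image of $\mathbf{y}$, so there is no inductive foothold. One way to close the gap is to bring the triangulated structure back in: complete $\phi\colon P_1\to P_0$ to a triangle $N\to P_1\to P_0\to N[1]$; since $\mathbf{y}$ is cohomological, the syzygy $G=\Img(\mathbf{y}\phi)$ equals $\Coker(\mathbf{y}N\to\mathbf{y}P_1)$, and a pure epimorphism $P_2\twoheadrightarrow N$ with $P_2$ pure-projective yields a projective presentation of $G$. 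The hypothesis on pure triangles (applied to $P_2\to N$) together with the long exact sequence of $H$ on the triangle then identifies $\overline{H}(G)$ with $\Img(H\phi)\hookrightarrow H(P_0)$, whence $L_1\overline{H}(F)=0$. An alternative, implicit in the paper's choice of construction, is to define a second, left exact, functor via injective copresentations and check that the two coincide.

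For (2) the paper again offers nothing beyond the citation. Your outline is in the spirit of Krause's argument, but both the assertion that a pure triangle is a filtered homotopy colimit of split triangles and the passage from ``$H$ preserves coproducts'' to ``$H$ carries such homotopy colimits to direct limits in $\Acal$'' require independent justification; neither step is automatic.
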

We recall from \cite{Bel} how to construct $\overline{H}$. Given $F$ in $\Modr\T^c$, consider an injective copresentation
$$\xymatrix{0\ar[r]&F\ar[r]&\mathbf{y}E_0\ar[r]^{\mathsf{y}\alpha}&\mathbf{y}E_1}$$
where $E_0$ and $E_1$ are pure-injective in $\T$ and $\alpha$ is a map in $\Hom_\T(E_0,E_1)$. Then we define $\overline{H}(F):=\Ker H(\alpha)$, and it can be checked that $\overline{H}$ is indeed well-defined (that is, it does not depend on the choice of the injective copresentation of $F$). This functor can also be obtained in a dual way by taking a projective presentation of $F$.


\subsection{Grothendieck hearts and purity}
Note that, in general, the cohomological functor associated to a t-structure does not commute with products and coproducts in $\T$. The following lemma provides necessary and sufficient conditions for this to happen.

\begin{lemma}\label{smash}
Let $\mathbb{T}=(\Ucal,\Vcal)$ be a nondegenerate t-structure in $\T$ with heart $\Hcal_\mathbb{T}$ and associated cohomological functor $H^0_\mathbb{T}:\T\rightarrow \Hcal_\mathbb{T}$. Then the functor $H^0_\mathbb{T}$ preserves $\Tcal$-coproducts (respectively, $\Tcal$-products) if and only if $\Vcal$ is closed under coproducts (respectively, $\Ucal$ is closed under products). 

If these conditions are satisfied, we say that $\mathbb{T}$ is \textbf{smashing} (respectively, \textbf{cosmashing}).
\end{lemma}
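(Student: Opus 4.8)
The plan is to reduce the statement to the behaviour of the truncation functors $u$ and $v$ and to a cohomological description of the coaisle. I will treat the coproduct case in full; the product case is entirely dual, interchanging the roles of $u,v$, of $\Ucal,\Vcal$, and of coproducts with products. The first thing I would record is that, for \emph{any} torsion pair, one has $\Ucal={}^{\perp_0}\Vcal$ and $\Vcal=\Ucal^{\perp_0}$: the inclusion $\Ucal\subseteq{}^{\perp_0}\Vcal$ is the orthogonality axiom, and conversely if $\Hom_{\T}(X,\Vcal)=0$ then the unit $g\colon X\to v(X)$ vanishes, forcing $v(X)=0$ and $X\cong u(X)\in\Ucal$ (dually for $\Vcal$). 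Since left orthogonal classes are closed under coproducts and right orthogonal classes under products, $\Ucal$ is \emph{always} closed under coproducts and $\Vcal$ is \emph{always} closed under products. Hence the genuine content is the closure of $\Vcal$ under coproducts (resp.\ of $\Ucal$ under products).

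For the implication ``$\Vcal$ closed under coproducts $\Rightarrow H^0_{\mathbb{T}}$ preserves coproducts'', I would start from a family $(X_i)_{i\in I}$ and take the coproduct of their truncation triangles $u(X_i)\to X_i\to v(X_i)\to u(X_i)[1]$. A coproduct of triangles is again a triangle, its first term $\coprod_i u(X_i)$ lies in $\Ucal$ (automatic) and its third term $\coprod_i v(X_i)$ lies in $\Vcal$ (by hypothesis); by uniqueness of truncation triangles it is therefore the truncation triangle of $\coprod_i X_i$, so both $u$ and $v$ commute with this coproduct. As $H^0_{\mathbb{T}}(X)=u(v(X)[1])[-1]$ and shifts commute with coproducts, $H^0_{\mathbb{T}}(\coprod_i X_i)\cong\coprod_i H^0_{\mathbb{T}}(X_i)$ computed in $\T$; and since $\Ucal[-1]$ and $\Vcal$ are both now closed under coproducts, this $\T$-coproduct already lies in $\Hcal_{\mathbb{T}}$ and so coincides with the coproduct formed in the heart. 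This gives exactly that $H^0_{\mathbb{T}}$ preserves coproducts.

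The converse is the delicate direction, and the tool I would use is the cohomological detection of the coaisle coming from nondegeneracy: writing $H^n_{\mathbb{T}}:=H^0_{\mathbb{T}}((-)[n])$, one has $Y\in\Vcal$ if and only if $H^n_{\mathbb{T}}(Y)=0$ for all $n<0$. One inclusion is immediate; for the other, the long exact sequence applied to $u(Y)\to Y\to v(Y)$ shows that if $H^n_{\mathbb{T}}(Y)=0$ for $n<0$ then $u(Y)\in\Ucal$ has vanishing cohomology in every degree, and then splitting off top cohomologies places $u(Y)$ in $\bigcap_{n\in\Z}\Ucal[n]=0$, so $Y\cong v(Y)\in\Vcal$. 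Granting this, suppose $H^0_{\mathbb{T}}$ preserves coproducts; then so does each $H^n_{\mathbb{T}}$. For a family $(V_i)$ in $\Vcal$ we have $H^n_{\mathbb{T}}(V_i)=0$ for all $n<0$, whence $H^n_{\mathbb{T}}(\coprod_i V_i)\cong\coprod_i H^n_{\mathbb{T}}(V_i)=0$ for $n<0$, and the characterisation yields $\coprod_i V_i\in\Vcal$. I expect the one real obstacle to be precisely this cohomological characterisation: one must check that nondegeneracy forces the family $(H^n_{\mathbb{T}})_{n\in\Z}$ to detect membership in $\Vcal$ (and dually in $\Ucal$, using $\bigcap_{n\in\Z}\Vcal[n]=0$), after which both directions become short.
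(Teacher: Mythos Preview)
Your proof is correct and follows essentially the same approach as the paper's: forward direction via both truncation functors commuting with coproducts, and converse via the cohomological description of $\Vcal$ afforded by nondegeneracy. You supply more detail than the paper (which simply asserts that nondegenerate t-structures are ``cohomologically described''), in particular the inductive argument that an object of $\Ucal$ with vanishing cohomology lies in $\bigcap_{n}\Ucal[n]$, but the strategy is identical.
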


\begin{proof}
We prove the statement for coproducts; the statement for products follows dually.
{Notice that aisles are always closed under coproducts. If also the coaisle $\Vcal$ is closed under coproducts, then both truncation functors} $u:\T\longrightarrow \Ucal$ and $v:\T\longrightarrow \Vcal$ commute with $\Tcal$-coproducts and, hence, so does $H^0_\mathbb{T}$. In particular, coproducts in $\Hcal_\mathbb{T}$ coincide with coproducts in $\T$.
For the converse, it is easy to check that nondegenerate t-structures can be cohomologically described, i.e. $\Vcal$ can be described as the subcategory formed by objects $X$ such that $H^0_\mathbb{T}(X[k])=0$ for all $k<0$. Consequently, since $H^0_\mathbb{T}$ commutes with $\T$-coproducts, this description shows that $\Vcal$ is closed under coproducts.
\end{proof}

\begin{example}\label{ex smash}
\begin{enumerate}
\item By definition, every silting t-structure is cosmashing and every cosilting t-structure is smashing.
\item If a silting object $T$ is pure-projective, then the associated t-structure is smashing. Indeed, let $(X_i)_{i\in I}$ be a family of objects in $T^{\perp_{<0}}$ and let $X$ be their coproduct in $\T$. Since $T$ is pure-projective, we have that $\Hom_\T(T,X[n])\cong \Hom_{\Modr\T^c}(\mathbf{y}T,\mathbf{y}X[n])$ for all $n$ in $\mathbb{Z}$. The statement then follows from the fact that $\mathbf{y}$ commutes with coproducts and $\Ker\Hom_{\Modr\T^c}(\mathbf{y}T,-)$ is coproduct-closed. 
\item If a cosilting object $C$ is pure-injective, in general, it does not follow that the associated t-structure is cosmashing. Indeed, let $A$ be the Kronecker algebra and let $C$ be the Reiten-Ringel cotilting module from \cite[Proposition 10.1]{RR} with associated torsion pair $(\Qcal\,,Cogen(C))$ in $\mathsf{Mod}(A)$, where $\Qcal$ is the class of all modules generated by preinjective $A$-modules. The object $C$ is cosilting in $D(A)$ (see \cite[Theorem 4.5]{St}). Note that, since $C$ is pure-injective in $\mathsf{Mod}(A)$ by \cite{B}, it follows from Theorem~\ref{Krause} that $C$ is also pure-injective when viewed as an object in $D(A)$. It turns out that the aisle of the associated cosilting t-structure consists precisely of those complexes whose zeroth cohomology belongs to $\Qcal$ and for which all positive cohomologies vanish (compare with \cite{HRS}). In particular, the cosilting t-structure is cosmashing if and only if $\Qcal$ is closed under products in $\mathsf{Mod}(A)$. But the latter cannot be true due to \cite[Theorem 5.2 and Example 5.4]{AH}.
\end{enumerate}
\end{example}

For a compactly generated triangulated category $\T$, (co)silting t-structures can be obtained in a rather abstract way. First, recall that $\T$ satisfies a \textbf{Brown representability theorem} (i.e. every cohomological functor $H:\T^{op}\rightarrow \mathsf{Mod}(\mathbb{Z})$ which sends coproducts to products is representable) and a \textbf{dual Brown representability theorem} (i.e. every cohomological functor $H:\T\rightarrow \mathsf{Mod}(\mathbb{Z})$ which sends products to products is representable) - see \cite{Kr3} for details. We can now state the following result.

\begin{theorem}\cite[Section 4]{NSZ}\label{t-structures and representability}
There is a bijection between
\begin{itemize} 
\item cosmashing nondegenerate t-structures whose heart has a projective generator;
\item equivalence classes of silting objects.
\end{itemize}
Dually, there is a bijection between
\begin{itemize}
\item smashing nondegenerate t-structures whose heart has an injective cogenerator; 
\item equivalence classes of cosilting objects.
\end{itemize}
\end{theorem}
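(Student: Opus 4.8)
The plan is to realise the bijection through two mutually inverse assignments, treating the silting case in full and obtaining the cosilting case by formal dualisation. In one direction, I would send a silting object $M$ to its associated t-structure $(M^{\perp_{>0}},M^{\perp_{\leq 0}})$; call this assignment $\Phi$. It factors through equivalence classes by the very definition of equivalence, and it lands in the prescribed class: silting t-structures are nondegenerate, they are cosmashing by Example~\ref{ex smash}(1), and their heart carries a projective generator, namely $H^0_M(M)$, by Lemma~\ref{properties heart}. Since equivalence classes of silting objects are by definition those giving the same t-structure, $\Phi$ is injective. It therefore suffices to build, from a cosmashing nondegenerate t-structure $\mathbb{T}=(\Ucal,\Vcal)$ whose heart $\Hcal_\mathbb{T}$ has a projective generator $P$, a silting object $\Psi(\mathbb{T})$ realising it, and to check that $\Phi$ and $\Psi$ are mutually inverse.

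For the construction I would invoke dual Brown representability. Consider the functor
$$F:=\Hom_{\Hcal_\mathbb{T}}(P,H^0_\mathbb{T}(-))\colon \T\longrightarrow \mathsf{Mod}(\mathbb{Z})$$
(up to a single cohomological shift chosen to align the indexing conventions). Since $P$ is projective in $\Hcal_\mathbb{T}$, the functor $\Hom_{\Hcal_\mathbb{T}}(P,-)$ is exact, so $F$ inherits cohomologicality from $H^0_\mathbb{T}$. Since $\mathbb{T}$ is cosmashing, $H^0_\mathbb{T}$ preserves products by Lemma~\ref{smash}, and products in $\Hcal_\mathbb{T}$ are obtained by applying $H^0_\mathbb{T}$ to products in $\T$; as $\Hom_{\Hcal_\mathbb{T}}(P,-)$ also preserves products, $F$ sends $\T$-products to products. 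The dual Brown representability theorem then yields an object $M$ with $F\cong \Hom_\T(M,-)$, and I set $\Psi(\mathbb{T}):=M$.

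It remains to verify that $M$ is silting and recovers $\mathbb{T}$. Because $P$ generates $\Hcal_\mathbb{T}$, one has $\Hom_\T(M,X[n])=0$ precisely when the relevant cohomology $H^n_\mathbb{T}(X)$ vanishes; combined with the cohomological descriptions of aisle and coaisle (as in the proof of Lemma~\ref{smash}) this identifies $M^{\perp_{>0}}=\Ucal$ and $M^{\perp_{\leq 0}}=\Vcal$, so that $(M^{\perp_{>0}},M^{\perp_{\leq 0}})=(\Ucal,\Vcal)$ is a t-structure. For the remaining condition $M\in M^{\perp_{>0}}=\Ucal$, I observe that $F$ factors through a single cohomology functor $H^n_\mathbb{T}$ which vanishes on the coaisle; hence $\Hom_\T(M,\Vcal)=0$, and since $\Ucal={}^{\perp_0}\Vcal$ for a t-structure, we conclude $M\in\Ucal$. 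Thus $M$ is silting with t-structure $\mathbb{T}$, giving $\Phi\circ\Psi=\mathrm{id}$; conversely, starting from a silting object the object produced from its t-structure shares that same t-structure and is therefore equivalent to it, giving $\Psi\circ\Phi=\mathrm{id}$.

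The main obstacle is the representability step: the whole passage from an abstract t-structure to an actual silting object rests on $F$ being representable, and this is exactly where the cosmashing hypothesis is indispensable, since it is precisely what forces $F$ to preserve products. One must also keep careful track of the single cohomological shift so that the represented object is silting for $\mathbb{T}$ and carries $P$ as the zeroth cohomology demanded by Lemma~\ref{properties heart}. The cosilting half is then proved dually: from a smashing nondegenerate t-structure with injective cogenerator $E$ I would form the contravariant functor $\Hom_{\Hcal_\mathbb{T}}(H^0_\mathbb{T}(-),E)$, which is exact in its first variable because $E$ is injective and which sends coproducts to products because $\mathbb{T}$ is smashing, and apply the ordinary Brown representability theorem to obtain a cosilting object $C$ represented by $\Hom_\T(-,C)$; the verification that $C$ is cosilting and recovers the t-structure runs exactly as above with all arrows reversed.
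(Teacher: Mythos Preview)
Your proposal is correct and follows essentially the same route as the paper: both directions of the bijection are established exactly as you describe, with the nontrivial step being the construction of the (co)silting object via (dual) Brown representability applied to the functor $\Hom_{\Hcal_\mathbb{T}}(P,H^0_\mathbb{T}(-))$ in the silting case, respectively $\Hom_{\Hcal_\mathbb{T}}(H^0_\mathbb{T}(-),E)$ in the cosilting case, where the (co)smashing hypothesis is precisely what guarantees the requisite product-preservation. The paper's sketch is terser---it defers the silting case to \cite{NSZ} and only outlines the cosilting case without spelling out the verification that the represented object is cosilting---so your added detail on recovering $\Ucal$ and $\Vcal$ from the vanishing of $H^n_\mathbb{T}$ and on deducing $M\in\Ucal$ is a genuine expansion, but the underlying argument is the same.
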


The first statement is proven in \cite{NSZ}. For the reader's convenience, we briefly sketch an argument for the second bijection. First recall that cosilting t-structures are smashing, nondegenerate and their hearts have injective cogenerators (see Lemma \ref{properties heart}). Hence, there is an injective assignment from equivalence classes of cosilting objects to the t-structures with the assigned properties. To see that the assignment is surjective, we use the fact that $\T$ satisfies Brown representability. Indeed, given a smashing nondegenerate t-structure $\mathbb{T}$ whose heart has injective cogenerator $E$, the corresponding cosilting object $C$ can be obtained as the (unique) representative of the cohomological functor $\Hom_\T(H^0_\mathbb{T}(-),E)\cong\Hom_\T(-,C)$. Note that $Hom_\T(H^0_\mathbb{T}(-),E)$ sends coproducts to products by the smashing assumption. The dual arguments were used in \cite{NSZ} to show the silting case.

We are now able to prove the main result of this section by building on Theorem \ref{t-structures and representability} and identifying which (co)silting t-structures have Grothendieck hearts. A similar result was obtained independently in \cite[Proposition 4.2]{NSZ} with the additional assumption that all t-structures considered are cosmashing.

\begin{theorem}\label{pure-inj Grothendieck}
Let $\mathbb{T}=(\Ucal,\Vcal)$ be a smashing nondegenerate t-structure in $\T$ with heart $\Hcal_\mathbb{T}$. Denote by $H^0_\mathbb{T}:\T\rightarrow \Hcal_\mathbb{T}$ the associated cohomological functor. The following statements are equivalent.
\begin{enumerate}
\item $\Hcal_\mathbb{T}$ is a Grothendieck category;
\item There is a pure-injective cosilting object $C$ in $\T$ such that $\mathbb{T}=({}^{\perp_{\leq 0}}C,{}^{\perp_{>0}}C)$.
\end{enumerate}
If the above conditions are satisfied, there is a (unique) exact functor $\overline{H^0_\mathbb{T}}:\Modr\T^c\longrightarrow \Hcal_\mathbb{T}$ with a right adjoint $j_*$ such that $\overline{H^0_\mathbb{T}}\circ\mathbf{y}=H^0_\mathbb{T}$ and $j_*H^0_\mathbb{T}(C)\cong \mathbf{y}C$. Moreover, there is a localisation sequence of the form
$$\xymatrix@C=0.5cm{
\Ker\overline{H^0_\mathbb{T}}= {}^{\perp_0}\mathbf{y}C \ar[rrr]^{i_*} &&& \Modr\T^c \ar[rrr]^{\overline{H^0_\mathbb{T}}}   \ar @/^1.5pc/[lll]_{i^!} &&& \Hcal_\mathbb{T} \ar @/^1.5pc/[lll]_{j_*}
 } 
$$
\end{theorem}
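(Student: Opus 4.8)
The plan is to factor $H^0_\mathbb{T}$ through the locally coherent Grothendieck category $\Modr\Tcal^c$ and to recognise the resulting functor as a Gabriel localisation. Since $\mathbb{T}$ is smashing, Lemma \ref{smash} gives that $H^0_\mathbb{T}$ preserves coproducts, a fact I use repeatedly. The pivotal property is that $H^0_\mathbb{T}$ sends pure triangles to short exact sequences; granting it, Theorem \ref{lift functor}(1) produces the unique exact functor $\overline{H^0_\mathbb{T}}\colon\Modr\Tcal^c\to\Hcal_\mathbb{T}$ with $\overline{H^0_\mathbb{T}}\circ\mathbf{y}=H^0_\mathbb{T}$. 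As $\mathbf{y}$ and $H^0_\mathbb{T}$ preserve coproducts, $\overline{H^0_\mathbb{T}}$ preserves coproducts of representables, and being right exact it therefore preserves all colimits. I would then exhibit its right adjoint explicitly as $j_*M=\Hom_{\Hcal_\mathbb{T}}(\overline{H^0_\mathbb{T}}(\mathbf{y}(-)),M)$: the adjunction isomorphism holds on the representables $\mathbf{y}K$ (with $K\in\Tcal^c$) by the Yoneda lemma, and hence on all objects, since both sides send colimits of representables to limits.

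The two implications differ only in how purity-exactness of $H^0_\mathbb{T}$ is secured. For $(1)\Rightarrow(2)$, a Grothendieck $\Hcal_\mathbb{T}$ has exact direct limits, so Theorem \ref{lift functor}(2) applies immediately; moreover it has an injective cogenerator, so Theorem \ref{t-structures and representability} presents $\mathbb{T}$ as the cosilting t-structure of some $C$, with $E:=H^0_\mathbb{T}(C)$ the injective cogenerator and $\Hom_\Tcal(-,C)\cong\Hom_{\Hcal_\mathbb{T}}(H^0_\mathbb{T}(-),E)$ by the corepresentation used there. For $(2)\Rightarrow(1)$ I cannot assume exact direct limits, and here the pure-injectivity of $C$ is used crucially: for a pure triangle $\Delta$, Theorem \ref{Krause}(3) together with the injectivity of $\mathbf{y}C$ shows that $\Hom_\Tcal(-,C[i])$ carries $\Delta$ to a short exact sequence for every $i$; transporting this through the same corepresentation (available via Theorem \ref{t-structures and representability} and Lemma \ref{properties heart}) and using that $\Hom_{\Hcal_\mathbb{T}}(-,E)$ is exact and faithful, one finds that every connecting morphism of the long exact sequence obtained by applying $H^0_\mathbb{T}$ to the shifts of $\Delta$ must vanish, so $H^0_\mathbb{T}(\Delta)$ is short exact.

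To recognise $\overline{H^0_\mathbb{T}}$ as a localisation I would first compute $j_*$ on injectives. Evaluating at $K\in\Tcal^c$ gives $j_*E(K)=\Hom_{\Hcal_\mathbb{T}}(H^0_\mathbb{T}(K),E)\cong\Hom_\Tcal(K,C)=\mathbf{y}C(K)$, so $j_*E\cong\mathbf{y}C$, and the same computation over $\Prod_{\Hcal_\mathbb{T}}(E)=\mathsf{Inj}(\Hcal_\mathbb{T})=H^0_\mathbb{T}(\Prod_\Tcal(C))$ (Lemma \ref{Add and Prod}) shows $j_*$ sends injectives of $\Hcal_\mathbb{T}$ to objects $\mathbf{y}E'$ with $E'\in\Prod_\Tcal(C)$, which are injective in $\Modr\Tcal^c$ by Theorem \ref{Krause}. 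A cosilting heart has an injective cogenerator and products, hence enough injectives, so each $M$ admits an injective copresentation $0\to M\to J^0\to J^1$; applying the left exact $j_*$ and then the exact $\overline{H^0_\mathbb{T}}$, and using $\overline{H^0_\mathbb{T}}j_*J^i\cong J^i$, yields $\overline{H^0_\mathbb{T}}j_*M\cong M$. Thus the counit is invertible, $j_*$ is fully faithful, and $\overline{H^0_\mathbb{T}}$ is a Gabriel localisation with localising kernel $\Scal:=\Ker\overline{H^0_\mathbb{T}}$. This gives $\Hcal_\mathbb{T}\simeq(\Modr\Tcal^c)/\Scal$, so $\Hcal_\mathbb{T}$ is Grothendieck, settling $(2)\Rightarrow(1)$; conversely $j_*E\cong\mathbf{y}C$ is injective, being a value of the right adjoint of an exact functor on an injective object, so $C$ is pure-injective by Theorem \ref{Krause}, settling $(1)\Rightarrow(2)$. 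Finally $\Hom_{\Modr\Tcal^c}(F,\mathbf{y}C)\cong\Hom_{\Hcal_\mathbb{T}}(\overline{H^0_\mathbb{T}}(F),E)$ together with the cogenerator property of $E$ gives $\Scal={}^{\perp_0}\mathbf{y}C$; since $\Scal$ is localising in the Grothendieck category $\Modr\Tcal^c$, its inclusion $i_*$ has a right adjoint $i^!$, producing the displayed localisation sequence.

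The main obstacle is the purity-exactness step in $(2)\Rightarrow(1)$: lacking the exact-direct-limits hypothesis, Theorem \ref{lift functor}(2) is unavailable, and the vanishing of the connecting maps has to be wrung out of the pure-injectivity of $C$ alone, through the corepresentation and the faithful exactness of $\Hom_{\Hcal_\mathbb{T}}(-,E)$. The secondary difficulty is the full faithfulness of $j_*$, which I would settle by the injective-copresentation computation above, relying on Lemma \ref{Add and Prod} to pin down $\mathsf{Inj}(\Hcal_\mathbb{T})$ and on the observation that an abelian category with products and an injective cogenerator has enough injectives.
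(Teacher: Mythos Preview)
Your argument is correct and close in spirit to the paper's; the shared backbone is: factor $H^0_\mathbb{T}$ through $\Modr\T^c$ via Theorem~\ref{lift functor}, identify injectives of $\Hcal_\mathbb{T}$ with $\Prod_\T(C)$ and hence with $\Prod(\mathbf{y}C)$ via Lemma~\ref{Add and Prod} and Theorem~\ref{Krause}, and then realise $\Hcal_\mathbb{T}$ as the Gabriel localisation of $\Modr\T^c$ at ${}^{\perp_0}\mathbf{y}C$. The organisational difference is that the paper first forms the abstract quotient $\mathcal{G}_C:=\Modr\T^c/{}^{\perp_0}\mathbf{y}C$ (with quotient $\pi$ and section $\rho$), observes that $\overline{H^0_\mathbb{T}}\circ\rho\colon\mathcal{G}_C\to\Hcal_\mathbb{T}$ is left exact and matches injectives, and invokes Lemma~\ref{extend injectives} to conclude it is an equivalence---the right adjoint $j_*$ is then obtained only afterwards, by checking $\overline{H^0_\mathbb{T}}\cong(\overline{H^0_\mathbb{T}}\circ\rho)\circ\pi$. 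You instead write down $j_*M=\Hom_{\Hcal_\mathbb{T}}(H^0_\mathbb{T}(-),M)$ explicitly and verify full faithfulness by reducing the counit to injectives via copresentations, which is essentially Lemma~\ref{extend injectives} unpacked. Your route yields the explicit formula for $j_*$ for free; the paper's route makes the Grothendieck property of $\Hcal_\mathbb{T}$ immediate (it is inherited from $\mathcal{G}_C$ before any adjoint is built). For $(1)\Rightarrow(2)$ the paper is marginally more direct: once Theorem~\ref{lift functor}(2) gives purity-exactness of $H^0_\mathbb{T}$, pure-injectivity of $C$ follows at once from the isomorphism $\Hom_\T(-,C)\cong\Hom_{\Hcal_\mathbb{T}}(H^0_\mathbb{T}(-),E)$, without first constructing $j_*$.
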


\begin{proof}
Suppose that $\Hcal_\mathbb{T}$ is a Grothendieck category. By Theorem \ref{t-structures and representability}, $\mathbb{T}$ is a cosilting t-structure for a cosilting object $C$, such that $\Hom_{\T}(H^0_\mathbb{T}(-),E)\cong \Hom_\Tcal(-,C)$ for some injective cogenerator $E$ in $\mathcal{H}_\mathbb{T}$. It remains to show that $C$ is pure-injective. Since, by Lemma \ref{smash}, $H^0_\mathbb{T}$ commutes with $\T$-coproducts, Theorem \ref{BelKr}(2) shows that $H^0_\mathbb{T}$ sends pure triangles to short exact sequences. In particular, $\Hom_\Tcal(-,C)$ sends pure triangles to short exact sequences, showing that $C$ is indeed pure-injective.

Conversely, let $C$ be a pure-injective cosilting object in $\Tcal$ with associated t-structure $\mathbb{T}=({}^{\perp_{\leq 0}}C,{}^{\perp_{>0}}C)$. It follows that the functor $\Hom_\Tcal(-,C)$ is naturally equivalent to $\Hom_{\T}(H^0_\mathbb{T}(-),H^0_\mathbb{T}(C))$ and, therefore, also the functor $H^0_\mathbb{T}$ sends pure triangles to short exact sequences. Consequently, by Theorem \ref{BelKr}(1), there is a (unique) exact functor $\overline{H^0_\mathbb{T}}:\Modr\T^c\longrightarrow \Hcal_\mathbb{T}$ such that $\overline{H^0_\mathbb{T}}\circ\mathbf{y}=H^0_\mathbb{T}$. The following argument is inspired by the proof of \cite[Theorem 6.2]{St}. 
Consider the hereditary torsion pair in $\Modr\Tcal^c$ cogenerated by the injective object $\mathbf{y}{C}$, i.e. the pair $({}^{\perp_0}\mathbf{y}C,\mathsf{Cogen}(\mathbf{y}{C}))$. 
The quotient category $\mathcal{G}_C:=\Modr\Tcal^c/{}^{\perp_0}\mathbf{y}C$ is a Grothendieck category (see \cite[Proposition III.9]{Gabriel}) and the quotient functor $\pi:\Modr\Tcal^c\longrightarrow \Gcal_C$ admits a fully faithful right adjoint functor $\rho:\mathcal{G}_C\longrightarrow \Modr\Tcal^c$, with essential image $$\mathsf{Cogen}(\mathbf{y}C)\cap \Ker \Ext^1_{\Modr\Tcal^c}({}^{\perp_0}\mathbf{y}C,-)$$ (see \cite[Corollary of Proposition III.3]{Gabriel} and \cite[Section 11.1.1]{Prest}). 
In particular, as in the proof of \cite[Theorem 6.2]{St}, it follows that an object $X$ of $\mathcal{G}_C$ is injective if and only if $\rho(X)$ lies in $\mathsf{Prod}(\mathbf{y}C)$, i.e. the full subcategory of injective objects in $\mathcal{G}_C$ is equivalent to $\mathsf{Prod}(\mathbf{y}C)$ which, by Theorem \ref{Krause}, is further equivalent to $\mathsf{Prod}(C)$. Thus, using Lemma \ref{Add and Prod}, we get the following commutative diagram of equivalences.
$$\xymatrix{\mathsf{Inj}(\Gcal_C)\ar[r]^\rho& \mathsf{Prod}(\mathbf{y}C)\ar[d]_{\overline{H^0_\mathbb{T}}}&\mathsf{Prod}(C)\ar[dl]^{H^0_\mathbb{T}}\ar[l]_{\mathbf{y}}\\ & \mathsf{Prod}(H^0_\mathbb{T}(C))}$$
Hence, the functor $\overline{H^0_\mathbb{T}}\circ\rho$ yields an equivalence between the category of injective objects in $\mathcal{G}_C$ and the category of injective objects in $\Hcal_\mathbb{T}$. Since the functor $\overline{H^0_\mathbb{T}}\circ\rho$ is clearly left exact, by Lemma \ref{extend injectives}, it extends to an equivalence of categories  $\mathcal{G}_C\cong\Hcal_\mathbb{T}$ showing, in particular, that $\Hcal_\mathbb{T}$ is a Grothendieck category.

Assume now that $\mathbb{T}$ satisfies (1) and (2). We first show that $\Ker\overline{H^0_\mathbb{T}}={}^{\perp_0}\mathbf{y}C$. Indeed, if $F$ is an object in ${}^{\perp_0}\mathbf{y}C$, then $\Hom_{\Modr\Tcal^c}(\mathbf{y}\alpha,\mathbf{y}C)$ is an epimorphism for any map $\mathbf{y}\alpha:\mathbf{y}E_0\longrightarrow \mathbf{y}E_1$ between injective objects in $\Modr\T^c$ with $\Ker(\mathbf{y}\alpha)=F$. Using the pure-injectivity of $C$, we get that $\Hom_\T(\alpha,C)$ and, hence, also $\Hom_{\Hcal_\mathbb{T}}(H^0_\mathbb{T}(\alpha),H^0_\mathbb{T}(C))$ is an epimorphism. Since $H^0_\mathbb{T}(C)$ is an injective cogenerator of $\Hcal_\mathbb{T}$, it follows that $H^0_\mathbb{T}(\alpha)$ is a monomorphism and, thus, $\overline{H^0_\mathbb{T}}(F)=0$, by the construction of $\overline{H^0_\mathbb{T}}$. Finally, since this argument is reversible the desired equality holds.

Now, in order to show the existence of the localisation sequence above, it is enough to prove that the functor $\overline{H^0_\mathbb{T}}$ admits a right adjoint. To this end, since $\overline{H^0_\mathbb{T}}\circ\rho$ is an equivalence and $\pi$ has a right adjoint, it suffices to check that $\overline{H^0_\mathbb{T}}\cong\overline{H^0_\mathbb{T}}\circ\rho\circ\pi$. By using the unit of the adjunction $(\pi,\rho)$, we get a natural transformation of functors $\overline{H^0_\mathbb{T}}\longrightarrow\overline{H^0_\mathbb{T}}\circ\rho\circ\pi$. We need to see that it induces an isomorphism on objects. But this follows from the fact that kernel and cokernel of the natural map $X\longrightarrow \rho\pi(X)$, for $X$ in $\Modr\Tcal^c$, are torsion, that is, they belong to ${}^{\perp_0}\mathbf{y}C=\Ker\overline{H^0_\mathbb{T}}$. Finally, by using the adjunction $(\overline{H^0_\mathbb{T}},j_*)$, we get $j_* H^0_\mathbb{T}(C)\cong j_*\overline{H^0_\mathbb{T}}(\mathbf{y}C)\cong\mathbf{y}C$, finishing the proof.
\end{proof}

One can state a somewhat dual result for silting objects. 

\begin{theorem}
Let $\mathbb{T}=(\mathbb{T}^{\leq 0},\mathbb{T}^{\geq 0})$ be a smashing and cosmashing nondegenerate t-structure in $\T$ with heart $\Hcal_\mathbb{T}$. Denote by $H^0_\mathbb{T}:\T\rightarrow \Hcal_\mathbb{T}$ the associated cohomological functor. The following are equivalent.
\begin{enumerate}
\item $\Hcal_\mathbb{T}$ is a Grothendieck category with a projective generator;
\item There is a pure-projective silting object $T$ in $\T$ such that $\mathbb{T}=({T}^{\perp_{>0}},{T}^{\perp_{\leq 0}})$.
\end{enumerate}
If the above conditions are satisfied, there is a (unique) exact functor $\overline{H^0_\mathbb{T}}:\Modr\T^c\longrightarrow \Hcal_\mathbb{T}$ with a left adjoint $j_!$ such that $\overline{H^0_\mathbb{T}}\circ\mathbf{y}=H^0_\mathbb{T}$ and $j_!H^0_\mathbb{T}(T)\cong \mathbf{y}T$. Moreover, there is a recollement of the form
$$\xymatrix@C=0.5cm{
\Ker\overline{H^0_\mathbb{T}} \ar[rrr]^{i_*} &&& \Modr\T^c \ar[rrr]^{\overline{H^0_\mathbb{T}}}  \ar @/_1.5pc/[lll]_{i^*}  \ar @/^1.5pc/[lll]_{i^!} &&& \Hcal_\mathbb{T}. \ar @/_1.5pc/[lll]_{j_!} \ar @/^1.5pc/[lll]_{j_*}
 } 
 $$
\end{theorem}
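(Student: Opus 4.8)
The plan is to mirror, step by step, the proof of Theorem~\ref{pure-inj Grothendieck}, systematically replacing injective objects and pure-injectivity by projective objects and pure-projectivity, and invoking the two hypotheses at complementary places: \emph{smashing} detects pure-projectivity of the silting object and makes $\overline{H^0_\mathbb{T}}$ preserve coproducts, while \emph{cosmashing} makes it preserve products and thereby supplies the extra adjoints that upgrade the cosilting \emph{localisation sequence} to a full \emph{recollement}. Throughout, write $P:=H^0_\mathbb{T}(T)$; by Lemma~\ref{properties heart} this is a projective generator of $\Hcal_\mathbb{T}$, and as in the construction behind Theorem~\ref{t-structures and representability} one has a natural isomorphism $\Hom_\T(T,-)\cong\Hom_{\Hcal_\mathbb{T}}(P,H^0_\mathbb{T}(-))$, the dual of the identity $\Hom_\T(-,C)\cong\Hom_{\Hcal_\mathbb{T}}(H^0_\mathbb{T}(-),H^0_\mathbb{T}(C))$ used in the cosilting case.

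For $(1)\Rightarrow(2)$, since $\mathbb{T}$ is cosmashing, nondegenerate and its heart has a projective generator, Theorem~\ref{t-structures and representability} produces a silting object $T$ with $\mathbb{T}=(T^{\perp_{>0}},T^{\perp_{\leq 0}})$. To see that $T$ is pure-projective I would show that $\Hom_\T(T,-)$ sends pure triangles to short exact sequences: as $\mathbb{T}$ is smashing, $H^0_\mathbb{T}$ preserves coproducts by Lemma~\ref{smash}, and since $\Hcal_\mathbb{T}$ is Grothendieck (hence has exact direct limits) Theorem~\ref{BelKr}(2) shows $H^0_\mathbb{T}$ sends pure triangles to short exact sequences; composing with the exact functor $\Hom_{\Hcal_\mathbb{T}}(P,-)$ (exact since $P$ is projective) and using the isomorphism above gives the claim, so $T$ is pure-projective. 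For $(2)\Rightarrow(1)$ the same isomorphism, together with the pure-projectivity of $T$ (so that $\Hom_\T(T,-)\cong\Hom_{\Modr\T^c}(\mathbf{y}T,\mathbf{y}(-))$ by Theorem~\ref{Krause}), shows $\Hom_\T(T,-)$ sends pure triangles to short exact sequences; because $P$ is a \emph{projective generator}, the functor $\Hom_{\Hcal_\mathbb{T}}(P,-)$ reflects exactness, and I would use this to deduce that $H^0_\mathbb{T}$ itself sends pure triangles to short exact sequences. Theorem~\ref{BelKr}(1) then yields the unique exact functor $\overline{H^0_\mathbb{T}}\colon\Modr\T^c\to\Hcal_\mathbb{T}$ with $\overline{H^0_\mathbb{T}}\circ\mathbf{y}=H^0_\mathbb{T}$.

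Next I would identify the kernel and prove the Grothendieck property, dualising the computation $\Ker\overline{H^0_\mathbb{T}}={}^{\perp_0}\mathbf{y}C$. Taking a projective presentation $\mathbf{y}P_1\xrightarrow{\mathbf{y}\beta}\mathbf{y}P_0\to F\to 0$, so that $\overline{H^0_\mathbb{T}}(F)=\Coker H^0_\mathbb{T}(\beta)$ by the dual description of $\overline{H^0_\mathbb{T}}$, one checks via Theorem~\ref{Krause}(3) and the isomorphism of the first paragraph that $F\in\Ker\overline{H^0_\mathbb{T}}$ precisely when $\Hom_{\Hcal_\mathbb{T}}(P,H^0_\mathbb{T}(\beta))$ is epic, i.e. $\Ker\overline{H^0_\mathbb{T}}=(\mathbf{y}T)^{\perp_0}$. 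Since both $\mathbf{y}$ and (by smashing) $H^0_\mathbb{T}$ preserve coproducts, $\overline{H^0_\mathbb{T}}$ preserves coproducts, so $\Ker\overline{H^0_\mathbb{T}}$ is a Serre subcategory closed under coproducts, hence localizing; consequently $\Modr\T^c/\Ker\overline{H^0_\mathbb{T}}$ is Grothendieck. I would then identify this quotient with $\Hcal_\mathbb{T}$: the localizing property gives a right adjoint $j_*$ of $\overline{H^0_\mathbb{T}}$ and a right adjoint $i^!$ of $i_*$, and full faithfulness of $j_*$ (equivalently, that $\overline{H^0_\mathbb{T}}$ is a localisation onto $\Hcal_\mathbb{T}$) is verified by reducing to the projective generator, using that $\overline{H^0_\mathbb{T}}$ restricts to an equivalence $\Add(\mathbf{y}T)\xrightarrow{\sim}\mathsf{Proj}(\Hcal_\mathbb{T})$ by Lemma~\ref{Add and Prod} and Theorem~\ref{Krause}. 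This shows $\Hcal_\mathbb{T}$ is Grothendieck and simultaneously yields the right-adjoint half $i^!,j_*$ of the recollement, exactly as in the cosilting localisation sequence.

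The main obstacle, and the genuinely new feature compared with Theorem~\ref{pure-inj Grothendieck}, is producing the \emph{left} adjoints $j_!$ and $i^*$; this is where cosmashing is essential. Since products in $\Modr\T^c$ are computed pointwise and are therefore exact, and $H^0_\mathbb{T}$ preserves products by Lemma~\ref{smash} (cosmashing), applying $\overline{H^0_\mathbb{T}}$ to a product of injective copresentations shows that $\overline{H^0_\mathbb{T}}$ preserves products; being also left exact, it preserves all limits, so the Special Adjoint Functor Theorem (valid because $\Modr\T^c$ is Grothendieck, hence complete, well-powered and cogenerated) furnishes the left adjoint $j_!$. Equivalently, product-preservation makes $\Ker\overline{H^0_\mathbb{T}}$ closed under products as well as coproducts, so this Serre subcategory is simultaneously localizing and colocalizing, and it is this bilocalizing property that produces both $i^*$ and $j_!$ alongside $i^!$ and $j_*$, i.e. the full recollement. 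Finally, $j_!H^0_\mathbb{T}(T)\cong\mathbf{y}T$ follows by Yoneda from the adjunction isomorphism $\Hom_{\Modr\T^c}(\mathbf{y}T,F)\cong\Hom_{\Hcal_\mathbb{T}}(P,\overline{H^0_\mathbb{T}}(F))$, which one checks on injective copresentations using Theorem~\ref{Krause}(3) together with the isomorphism of the first paragraph.
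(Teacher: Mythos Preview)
Your proposal is correct and follows essentially the same approach as the paper: the paper's proof simply states that the arguments are dual to Theorem~\ref{pure-inj Grothendieck}, notes that the extra smashing hypothesis is what allows the use of Theorem~\ref{BelKr}(2) to establish pure-projectivity of the silting object, and observes that one obtains a full recollement (rather than merely a localisation sequence) because $\Ker\overline{H^0_\mathbb{T}}$ is closed under both products and coproducts---precisely the points your detailed dualisation spells out. One small presentational wrinkle: when you say ``the localizing property gives a right adjoint $j_*$ of $\overline{H^0_\mathbb{T}}$'' you are tacitly identifying $\overline{H^0_\mathbb{T}}$ with the Gabriel quotient functor before having shown the induced functor $\Modr\T^c/\Ker\overline{H^0_\mathbb{T}}\to\Hcal_\mathbb{T}$ is an equivalence; the clean fix is to obtain $j_*$ directly from the adjoint functor theorem (since $\overline{H^0_\mathbb{T}}$ is exact and coproduct-preserving on a Grothendieck category) and then verify full faithfulness of $j_*$ via your identity $\Hom_{\Modr\T^c}(\mathbf{y}T,-)\cong\Hom_{\Hcal_\mathbb{T}}(P,\overline{H^0_\mathbb{T}}(-))$.
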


\begin{proof}
The arguments are dual to those in the proof of Theorem \ref{pure-inj Grothendieck}.  
Note that the additional assumption of the t-structure being smashing comes into play through the use of Theorem \ref{lift functor}(2), which is needed in an essential way to prove the pure-projectivity of the associated silting object. On the other hand, we have seen in Example \ref{ex smash}(2) that the t-structure is smashing whenever $T$ is a pure-projective silting object. Finally, observe that we get a recollement rather than just a localisation sequence like in Theorem \ref{pure-inj Grothendieck}, since, in the given context, $\Ker\overline{H^0_\mathbb{T}}$ is closed under products and coproducts in $\Modr\T^c$ (see also \cite[Corollary 4.4]{PV0}).
\end{proof}

As an immediate consequence of these results, we can identify the t-structures with Grothendieck hearts within the bijections of Theorem \ref{t-structures and representability}.
\begin{corollary}
There is a bijection between 
\begin{itemize} 
\item  smashing nondegenerate t-structures of $\T$ whose heart is a Grothendieck category; 
\item equivalence classes of pure-injective cosilting objects. 
\end{itemize}
Dually, there is a bijection between 
\begin{itemize} 
\item smashing and cosmashing nondegenerate  t-structures in $\T$ whose heart is a Grothendieck category with a projective generator; 
\item equivalence classes of pure-projective silting objects. 
\end{itemize}
\end{corollary}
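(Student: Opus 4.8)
The plan is to obtain both bijections by restricting the two bijections of Theorem \ref{t-structures and representability} along the characterisations furnished by Theorem \ref{pure-inj Grothendieck} and its silting analogue.

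For the first bijection, I would recall that the cosilting half of Theorem \ref{t-structures and representability} assigns to each equivalence class of cosilting objects $[C]$ the smashing nondegenerate t-structure $({}^{\perp_{\leq 0}}C,{}^{\perp_{>0}}C)$, whose heart carries an injective cogenerator by Lemma \ref{properties heart}. Since every Grothendieck category admits an injective cogenerator, the smashing nondegenerate t-structures with Grothendieck heart form a subclass of the t-structures on the left-hand side of that bijection; it therefore suffices to single out, among all cosilting classes, those whose associated t-structure has Grothendieck heart. This is exactly the content of Theorem \ref{pure-inj Grothendieck}: a smashing nondegenerate t-structure $\mathbb{T}$ has Grothendieck heart if and only if $\mathbb{T}=({}^{\perp_{\leq 0}}C,{}^{\perp_{>0}}C)$ for some pure-injective cosilting object $C$. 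Restricting the cosilting bijection of Theorem \ref{t-structures and representability} thus yields a bijection between smashing nondegenerate t-structures with Grothendieck heart and those equivalence classes of cosilting objects admitting a pure-injective representative. I would note that this restriction is well defined on equivalence classes, since the cosilting object recovered via Brown representability is determined up to isomorphism by the t-structure.

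The second bijection follows dually, replacing Theorem \ref{pure-inj Grothendieck} by its silting analogue and the cosilting half of Theorem \ref{t-structures and representability} by the silting half, whose domain consists of cosmashing nondegenerate t-structures whose heart has a projective generator. Starting from a pure-projective silting object $T$, its t-structure is cosmashing (all silting t-structures are) and is moreover smashing by Example \ref{ex smash}(2), while its heart has a projective generator by Lemma \ref{properties heart}; the silting theorem then gives that the heart is a Grothendieck category. Conversely, a smashing and cosmashing nondegenerate t-structure whose heart is a Grothendieck category with projective generator satisfies the hypotheses of the silting theorem, which produces a pure-projective silting representative. Hence the silting bijection of Theorem \ref{t-structures and representability} restricts to a bijection between smashing and cosmashing nondegenerate t-structures with Grothendieck heart and projective generator, and equivalence classes of pure-projective silting objects.

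The only point requiring genuine care is the matching of domains, which in both cases is pure bookkeeping: in the cosilting case one checks that a Grothendieck heart has an injective cogenerator, so that the relevant t-structures indeed lie in the domain of Theorem \ref{t-structures and representability}; in the silting case one checks that the additional smashing hypothesis is precisely what pure-projectivity supplies, via Example \ref{ex smash}(2). Beyond this, the statement is an immediate consequence of the two structural theorems, so I would keep the argument brief.
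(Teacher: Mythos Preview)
Your proposal is correct and follows exactly the approach the paper intends: the corollary is stated without proof, introduced only by the sentence ``As an immediate consequence of these results, we can identify the t-structures with Grothendieck hearts within the bijections of Theorem \ref{t-structures and representability},'' and your argument is precisely this restriction, spelled out carefully. The only thing you might trim is the discussion of well-definedness on equivalence classes, which is automatic once one is restricting an existing bijection.
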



\section{Cosuspended TTF classes}
In this section, we focus on cosuspended TTF classes in a compactly generated triangulated category $\T$. We relate the properties of the previous section (namely, Grothendieck hearts and the pure-injectivity of the associated cosilting objects) with the definability of the cosuspended TTF class. As a consequence, if $\T$ is algebraic, nondegenerate compactly generated t-structures have Grothendieck hearts. 

\subsection{Coherent functors and definability}
We begin with a short reminder on coherent functors and definable subcategories of $\T$, and we obtain an easy (but useful) criterion to check whether a certain subcategory of $\T$ is definable or not. {We also prove that a definable subcategory $\Vcal$ of $\T$ is preenveloping, i.e. for any object $X$ in $\T$ there is a map $\phi:X\longrightarrow V$ with $V$ in $\Vcal$ such that $\Hom_\Tcal(\phi,V^\prime)$ is surjective for all $V^\prime$ in $\Vcal$.}

Recall from \cite[Proposition 5.1]{Kr2} that a covariant additive functor $F:\Tcal\longrightarrow \mathsf{Mod}(\mathbb{Z})$ is said to be \textbf{coherent} if the following equivalent conditions are satisfied
\begin{enumerate}
\item there are compact objects $K$ and $L$ and a presentation 
$$\Hom_\T(K,-)\longrightarrow \Hom_\T(L,-)\longrightarrow F\longrightarrow 0.$$
\item $F$ preserves products and coproducts and sends pure triangles to short exact sequences.
\end{enumerate}
The category of coherent functors is denoted by $\mathsf{Coh}$-$\T$. For a  locally coherent Grothendieck category {$\mathcal{G}$} or, more generally,  a locally finitely presented additive category {with products}, coherent functors are defined analogously, replacing  in (1) the compactness of $K$ and $L$  by the property of being finitely presented. The analogue of (2) then states that coherent functors are precisely the functors $\Gcal\longrightarrow \mathsf{Mod}(\mathbb{Z})$ preserving products and direct limits (\cite[Proposition 3.2]{Kr4}).

\begin{definition}
A subcategory $\Vcal$ of $\Tcal$ is said to be \textbf{definable} if there is a set of coherent functors $(F_i)_{i\in I}$ from $\T$ to $\mathsf{Mod}(\mathbb{Z})$ such that $X$ lies in $\Vcal$ if and only if $F_i(X)=0$ for all $i$ in $I$.
\end{definition}

Definable subcategories of a {locally finitely presented additive category $\Gcal$ with products} are defined as above: they are zero-sets of families of coherent functors. Recall that a subcategory of $\Gcal$ is definable if and only if it is closed under products, direct limits and pure subobjects (\cite[Theorem 2]{Kr4}). Moreover, definable subcategories of $\Gcal$ are closed under pure-injective envelopes (see \cite[Section 16.1.2]{Prest}).
Note that, by definition, also definable subcategories of $\T$ are closed under products, coproducts, pure subobjects and pure quotients, but we do not know whether they are characterised by such closure conditions (unless stronger assumptions are imposed, see \cite[Theorem 7.5]{Kr2}). A useful criterion for definability in $\Tcal$ will be provided in Corollary \ref{definable subcat} below.

\smallskip

For a subcategory $\Vcal$ of a compactly generated triangulated (respectively, a locally coherent Grothendieck) category, we denote by $\mathsf{Def}(\Vcal)$ the smallest definable subcategory containing $\Vcal$.

\begin{example}\label{flat}
A notion of flatness in $\Modr\Tcal^c$ is developed in \cite[Section 2.3]{Kr1} and \cite[Section 8.1]{Bel}. The subcategory $\mathsf{Flat}$-$\Tcal^c$ of flat objects in $\Modr\Tcal^c$ is locally finitely presented and contains precisely the functors $F$ that send triangles to exact sequences or, equivalently, that satisfy $\Ext^1(G,F)=0$ for all finitely presented functors $G$ in $\Modr\Tcal^c$. Moreover, $\mathsf{Flat}$-$\Tcal^c$ is a definable subcategory of $\Modr\Tcal^c$ by \cite[Theorem 16.1.12]{Prest}. Note that all objects of the form $\mathbf{y}X$, for $X$ in $\Tcal$, are flat.

The definable closure $\mathsf{Def}(\Vcal)$ in $\Modr\T^c$  of a set $\Vcal$ of objects  contained in $\mathsf{Flat}$-$\Tcal^c$ consists of pure subobjects of  direct limits in $\Modr\T^c$ of  directed systems whose terms are products of objects in $\Vcal$.  
Indeed,  to prove this, one uses  the notion of a reduced product from \cite[p.~465]{Kr5}. Since $\mathsf{Flat}$-$\Tcal^c$ is a definable subcategory of $\Modr\T^c$,  it suffices to show that the pure subobjects of reduced products of  objects in $\Vcal$ form a definable subcategory of $\mathsf{Flat}$-$\Tcal^c$. But the latter statement follows from \cite[Corollary 4.10]{Kr5} combined with  \cite[Proposition 2.2]{Kr5}.
\end{example}

We have the following useful fact (compare with \cite[Theorem 1.9]{ALPP}).

\begin{proposition}
Let $\mathsf{fun}(\mathsf{Flat}$-$\Tcal^c)$ denote the category of coherent functors {from the locally finitely presented category $\mathsf{Flat}$-$\Tcal^c$ to $\mathsf{Mod}(\mathbb{Z})$}. Then the assignment {$\mathsf{fun}(\mathsf{Flat}$-$\Tcal^c)\longrightarrow \mathsf{Coh}$-$\Tcal$} that sends a functor $F$ to $F\circ\mathbf{y}$ is an equivalence of categories.
\end{proposition}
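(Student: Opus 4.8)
The plan is to reduce the entire statement to one precise fact: the identification of the finitely presented objects of the locally finitely presented category $\mathsf{Flat}$-$\Tcal^c$. First note that $\mathbf{y}$ does factor through $\mathsf{Flat}$-$\Tcal^c$ (Example \ref{flat}), and that its restriction to $\Tcal^c$ is fully faithful: for $K$ in $\Tcal^c$ the object $\mathbf{y}K=\Hom_\Tcal(-,K)_{|\Tcal^c}$ is the representable functor on $\Tcal^c$, so by the Yoneda lemma $\Hom_{\Modr\Tcal^c}(\mathbf{y}K,F)\cong F(K)$ for every $F$ in $\Modr\Tcal^c$; in particular $\Hom_{\Modr\Tcal^c}(\mathbf{y}K,\mathbf{y}L)\cong\Hom_\Tcal(K,L)$. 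The key claim, which I regard as the heart of the argument, is that the representables $\mathbf{y}K$ (with $K$ in $\Tcal^c$) are exactly the finitely presented objects of $\mathsf{Flat}$-$\Tcal^c$, so that $\mathbf{y}$ induces an equivalence $\Tcal^c\simeq \mathrm{fp}(\mathsf{Flat}$-$\Tcal^c)$.

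To prove this claim I would argue as follows. Each $\mathbf{y}K$ is a finitely generated projective, hence finitely presented, object of $\Modr\Tcal^c$; since $\mathsf{Flat}$-$\Tcal^c$ is definable and so closed under direct limits in $\Modr\Tcal^c$, direct limits computed in it agree with those in $\Modr\Tcal^c$, and therefore $\mathbf{y}K$ remains finitely presented in $\mathsf{Flat}$-$\Tcal^c$. Conversely, flat functors over the ringoid $\Tcal^c$ are precisely the filtered colimits of representables $\mathbf{y}K$ (see \cite[Section 2.3]{Kr1}); a standard fact about locally finitely presented categories then shows that every finitely presented object of $\mathsf{Flat}$-$\Tcal^c$ is a direct summand of a finite coproduct of such representables. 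Since $\Tcal$, and hence $\Tcal^c$, is idempotent complete and $\mathbf{y}_{|\Tcal^c}$ is fully faithful, such a summand corresponds to a splitting in $\Tcal^c$ and is therefore again representable. This establishes the claim, and I expect this finitely presented identification to be the main obstacle; everything afterwards is formal.

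Granting the identification, set $\Phi(F):=F\circ\mathbf{y}$. One checks that $\Phi$ is additive and right exact, because cokernels of coherent functors are computed objectwise and $\Phi(F)(X)=F(\mathbf{y}X)$; moreover $\Phi$ sends the corepresentable $\Hom_{\mathsf{Flat}\text{-}\Tcal^c}(\mathbf{y}K,-)$ to $\Hom_\Tcal(K,-)$, by the Yoneda computation above. Every coherent functor $F$ on $\mathsf{Flat}$-$\Tcal^c$ admits a presentation $\Hom_{\mathsf{Flat}\text{-}\Tcal^c}(\mathbf{y}L,-)\to\Hom_{\mathsf{Flat}\text{-}\Tcal^c}(\mathbf{y}K,-)\to F\to 0$ with $K,L$ in $\Tcal^c$, using that its finitely presented ``building blocks'' are representable; applying $\Phi$ yields a presentation $\Hom_\Tcal(L,-)\to\Hom_\Tcal(K,-)\to\Phi(F)\to 0$ by representable functors, so $\Phi(F)$ indeed lies in $\mathsf{Coh}$-$\Tcal$, which gives well-definedness.

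For full faithfulness I would apply $\Hom(-,G)$ to such a presentation of $F$ and compare, as a diagram of abelian groups, the resulting left-exact sequences for $F$ and for $\Phi(F)$. The Yoneda isomorphisms $\Hom(\Hom_{\mathsf{Flat}\text{-}\Tcal^c}(\mathbf{y}K,-),G)\cong G(\mathbf{y}K)$ in $\mathsf{fun}(\mathsf{Flat}$-$\Tcal^c)$ and $\Hom(\Hom_\Tcal(K,-),\Phi G)\cong \Phi G(K)=G(\mathbf{y}K)$ in $\mathsf{Coh}$-$\Tcal$ identify the two outer vertical maps, and the five lemma then forces $\Hom(F,G)\cong\Hom(\Phi F,\Phi G)$. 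Essential surjectivity follows dually: any $G$ in $\mathsf{Coh}$-$\Tcal$ has a presentation $\Hom_\Tcal(L,-)\xrightarrow{\Hom_\Tcal(\alpha,-)}\Hom_\Tcal(K,-)\to G\to 0$ in which the connecting map comes, by Yoneda, from a morphism $\alpha$ in $\Tcal^c$, and then $F:=\Coker\big(\Hom_{\mathsf{Flat}\text{-}\Tcal^c}(\mathbf{y}\alpha,-)\big)$ is a coherent functor on $\mathsf{Flat}$-$\Tcal^c$ with $\Phi(F)\cong G$. Thus $\Phi$ is an equivalence, the whole statement being a routine consequence of matching projective presentations under the right exact functor $\Phi$ once the finitely presented objects of $\mathsf{Flat}$-$\Tcal^c$ are pinned down.
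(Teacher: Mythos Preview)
Your argument is correct, but it takes a genuinely different route from the paper's. You reduce everything to identifying $\mathrm{fp}(\mathsf{Flat}\text{-}\Tcal^c)$ with $\Tcal^c$ via $\mathbf{y}$, and then run a purely formal presentation-matching argument to obtain well-definedness, full faithfulness, and essential surjectivity. This is self-contained and makes no appeal to the intrinsic characterisation of coherent functors. The paper instead proves well-definedness using characterisation (2): it observes that $\mathbf{y}$ sends pure triangles to short exact sequences in $\mathsf{Flat}\text{-}\Tcal^c$, that all short exact sequences there are pure (direct limits of split ones), and hence that any coherent $F$ on $\mathsf{Flat}\text{-}\Tcal^c$ sends them to exact sequences; combined with preservation of products and coproducts this forces $F\circ\mathbf{y}$ to be coherent. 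For the equivalence the paper simply invokes \cite[Proposition 4.1]{Kr2}, which produces for each $G$ in $\mathsf{Coh}\text{-}\Tcal$ a unique $\overline{G}$ on $\mathsf{Flat}\text{-}\Tcal^c$ with $\overline{G}\circ\mathbf{y}=G$, and uniqueness gives the quasi-inverse. In effect your proof reproves that cited result in this special case; what you gain is an elementary, citation-free argument, while the paper's version is much shorter and highlights the conceptual reason (pure-exactness of $\mathsf{Flat}\text{-}\Tcal^c$) behind well-definedness. One small remark: your claim that $\Tcal^c$ is idempotent complete is used implicitly and is indeed true here, since summands in $\Tcal$ of compact objects are compact and $\Tcal$ itself is idempotent complete.
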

\begin{proof}
First, we observe that the assignment is well-defined. It is clear that given $F$ in $\mathsf{fun}(\mathsf{Flat}$-$\Tcal^c)$, {the composition} $F\circ\mathbf{y}$ preserves products and coproducts. Now, given a pure triangle $\Delta$ in $\T$, we have that $\mathbf{y}(\Delta)$ is a short exact sequence in $\mathsf{Flat}$-$\T^c$. Since short exact sequences in $\mathsf{Flat}$-$\Tcal^c$ are pure or, equivalently, direct limits of split exact sequences (see \cite[Theorem 16.1.15]{Prest}), {we see that} $F(\mathbf{y}(\Delta))$ is a short exact sequence of abelian groups. It then follows that $F\circ\mathbf{y}$ is coherent by the description (2) of coherent functors above. 

In order to see that this assignment yields an equivalence of categories we show that it admits a quasi-inverse. By \cite[Proposition 4.1]{Kr2}, each functor $G$ in {$\mathsf{Coh}$-$\T$} gives rise to a unique functor $\overline{G}$ in $\mathsf{fun}(\mathsf{Flat}$-$\Tcal^c)$ such that $\overline{G}\circ\mathbf{y}=G$. The uniqueness guarantees the functoriality of this assignment and it is clear that the assignments are inverse to each other.
\end{proof}

 As a corollary of the proposition above, we deduce the following statement.

\begin{corollary}\label{definable subcat}\label{closure properties}
Let $\Vcal$ be a class of objects in $\T$. The smallest definable subcategory of $\T$ containing $\Vcal$ is $$\mathsf{Def}(\Vcal)=\{X\in\T:\mathbf{y}X\in\mathsf{Def}(\mathbf{y}\Vcal)\}.$$
As a consequence,  any definable subcategory of  $\Tcal$ is closed under pure-injective envelopes.
\end{corollary}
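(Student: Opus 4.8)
The plan is to transport the whole problem into the functor category, using the equivalence $\mathsf{fun}(\mathsf{Flat}$-$\Tcal^c)\simeq\mathsf{Coh}$-$\Tcal$, $F\mapsto F\circ\mathbf{y}$, of the preceding proposition as the single engine; this is what lets a family of coherent functors on $\T$ be matched with a family of coherent functors on $\mathsf{Flat}$-$\Tcal^c$ and vice versa. Write $\Dcal:=\{X\in\T:\mathbf{y}X\in\mathsf{Def}(\mathbf{y}\Vcal)\}$ for the right-hand side, so the task is to prove $\Dcal=\mathsf{Def}(\Vcal)$. First I would record that $\mathbf{y}\Vcal\subseteq\mathsf{Flat}$-$\Tcal^c$ and, since $\mathsf{Flat}$-$\Tcal^c$ is definable in $\Modr\Tcal^c$, that $\mathsf{Def}(\mathbf{y}\Vcal)$ is a definable subcategory of the locally finitely presented category $\mathsf{Flat}$-$\Tcal^c$; hence it is the common zero-locus of a family $(F_i)_{i\in I}$ of coherent functors on $\mathsf{Flat}$-$\Tcal^c$.

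Next I would show that $\Dcal$ is definable and contains $\Vcal$. Indeed, $X\in\Dcal$ holds iff $F_i(\mathbf{y}X)=0$ for all $i$, i.e. iff $(F_i\circ\mathbf{y})(X)=0$ for all $i$; as each $F_i\circ\mathbf{y}$ lies in $\mathsf{Coh}$-$\Tcal$ by the equivalence, $\Dcal$ is the zero-locus of a family of coherent functors on $\T$ and is therefore definable. Since $\mathbf{y}\Vcal\subseteq\mathsf{Def}(\mathbf{y}\Vcal)$ gives $\Vcal\subseteq\Dcal$, minimality yields $\mathsf{Def}(\Vcal)\subseteq\Dcal$. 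For the reverse inclusion, let $\Scal$ be an arbitrary definable subcategory of $\T$ with $\Vcal\subseteq\Scal$, say $\Scal=\bigcap_j\Ker G_j$ with $G_j$ coherent on $\T$. Writing $G_j=F_j'\circ\mathbf{y}$ with $F_j'$ coherent on $\mathsf{Flat}$-$\Tcal^c$ and setting $\Wcal:=\bigcap_j\Ker F_j'$, a definable subcategory of $\mathsf{Flat}$-$\Tcal^c$, one gets $\Scal=\{X\in\T:\mathbf{y}X\in\Wcal\}$. From $\Vcal\subseteq\Scal$ we obtain $\mathbf{y}\Vcal\subseteq\Wcal$, hence $\mathsf{Def}(\mathbf{y}\Vcal)\subseteq\Wcal$, so $X\in\Dcal$ implies $\mathbf{y}X\in\mathsf{Def}(\mathbf{y}\Vcal)\subseteq\Wcal$, i.e. $X\in\Scal$. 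Taking $\Scal=\mathsf{Def}(\Vcal)$ gives $\Dcal\subseteq\mathsf{Def}(\Vcal)$, and the two subcategories coincide.

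For the consequence, let $\Vcal$ be definable in $\T$, so that $\mathsf{Def}(\Vcal)=\Vcal$ and the equality just proved reads $\Vcal=\{X\in\T:\mathbf{y}X\in\Wcal\}$ with $\Wcal:=\mathsf{Def}(\mathbf{y}\Vcal)$ definable in $\mathsf{Flat}$-$\Tcal^c$. Given $V\in\Vcal$ with pure-injective envelope $V\to E$ in $\T$, I would pass to $\mathbf{y}$: by Theorem \ref{Krause} the object $\mathbf{y}E$ is injective in $\Modr\Tcal^c$, and since $V\to E$ is a pure monomorphism the induced map sits in a short exact sequence $0\to\mathbf{y}V\to\mathbf{y}E\to\mathbf{y}Z\to 0$ of flat functors, which is pure; thus $\mathbf{y}V\to\mathbf{y}E$ is a pure monomorphism. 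As $\mathbf{y}V\in\Wcal$ and $\Wcal$, being definable in the locally finitely presented category $\mathsf{Flat}$-$\Tcal^c$, is closed under pure-injective envelopes, the pure-injective envelope of $\mathbf{y}V$ formed in $\mathsf{Flat}$-$\Tcal^c$ lies in $\Wcal$. It then suffices to identify that envelope with $\mathbf{y}E$, for this gives $\mathbf{y}E\in\Wcal$ and hence $E\in\Vcal$.

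I expect the main obstacle to be precisely this last identification, i.e. matching the pure-injective envelope taken in $\T$ with the one taken in $\mathsf{Flat}$-$\Tcal^c$. The key observations are that $\mathbf{y}E$ is pure-injective (being injective in $\Modr\Tcal^c$) and flat, that $\mathbf{y}V\to\mathbf{y}E$ is a pure monomorphism, and that it is simultaneously the injective envelope of $\mathbf{y}V$ in $\Modr\Tcal^c$ by Krause's correspondence, hence an essential monomorphism. A standard argument then applies: the pure-injective envelope $\mathsf{PE}(\mathbf{y}V)$ embeds as a direct summand of $\mathbf{y}E$ containing $\mathbf{y}V$, and essentiality forces the complementary summand, which meets $\mathbf{y}V$ trivially, to vanish, so $\mathbf{y}E\cong\mathsf{PE}(\mathbf{y}V)$. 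Finally, because $\mathsf{Flat}$-$\Tcal^c$ is itself definable in $\Modr\Tcal^c$, and therefore closed under pure-injective envelopes, this envelope is computed inside $\mathsf{Flat}$-$\Tcal^c$ as well, which closes the argument.
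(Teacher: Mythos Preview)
Your proof is correct and follows the route the paper intends. The corollary is stated in the paper without proof, as a direct consequence of the preceding equivalence $\mathsf{fun}(\mathsf{Flat}\text{-}\Tcal^c)\simeq\mathsf{Coh}\text{-}\Tcal$, and your argument unpacks exactly this: the bijection between coherent functors on $\Tcal$ and on $\mathsf{Flat}\text{-}\Tcal^c$ translates definable subcategories on one side into definable subcategories on the other, which is precisely how you obtain both inclusions for $\mathsf{Def}(\Vcal)$. The only place where you supply substantially more detail than the paper is the identification of the pure-injective envelope of $\mathbf{y}V$ in $\mathsf{Flat}\text{-}\Tcal^c$ with $\mathbf{y}E$; this step is genuinely needed, and your treatment (splitting off $\mathsf{PE}(\mathbf{y}V)$ inside $\mathbf{y}E$ and using essentiality of the injective envelope in $\Modr\Tcal^c$ to kill the complement) is sound.
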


{Recall from \cite[Theorem 4.1]{CPT} that any definable subcategory of a locally finitely presented additive category $\Gcal$ with products is preenveloping. The following proposition establishes a triangulated analogue. Its proof is inspired by the proof of \cite[Theorem 4.3]{AI}.}

{\begin{proposition}\label{def is preenv}
Let $\Vcal$ be a definable subcategory of $\Tcal$. Then $\Vcal$ is preenveloping. In particular, if $\Vcal$ is cosuspended, then $({}^{\perp_0}\Vcal,\Vcal)$ is a t-structure.
\end{proposition}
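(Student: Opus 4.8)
The plan is to mimic the transfinite (here countable) ``cell-attachment'' argument behind \cite[Theorem 4.3]{AI}. First I would unpack the definability of $\Vcal$: fix a set $(F_i)_{i\in I}$ of coherent functors with $\Vcal=\{X\in\T: F_i(X)=0\ \forall i\in I\}$, and choose for each $i$ a presentation $\Hom_\T(K_i,-)\to\Hom_\T(L_i,-)\to F_i\to 0$ arising from a morphism $\gamma_i\colon L_i\to K_i$ between compact objects (description (1) of coherent functors). Since then $F_i(Y)=\Coker\bigl(\Hom_\T(\gamma_i,Y)\bigr)$, the condition $X\in\Vcal$ translates into the purely diagrammatic statement that \emph{every morphism $L_i\to X$ factors through $\gamma_i$}, for all $i\in I$. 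This reformulation is what makes both the construction and its universal property transparent.

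Next, given $X\in\T$, I would build a sequence $X=X_0\to X_1\to\cdots$ that successively forces these factorizations. At stage $n$, form the coproduct $\Lambda_n:=\bigoplus_{i\in I}\bigoplus_{g\in\Hom_\T(L_i,X_n)}L_i$ (a genuine set-indexed coproduct, since $I$ and each $\Hom_\T(L_i,X_n)$ are sets and $\T$ has coproducts), together with the morphism $\Lambda_n\to X_n$ whose $(i,g)$-component is $g$, and the morphism $\bigoplus\gamma_i\colon\Lambda_n\to\bigoplus K_i$ to the coproduct of the $K_i$ over the same index set. Define $X_{n+1}$ as their homotopy pushout, giving a triangle $\Lambda_n\to(\bigoplus K_i)\oplus X_n\to X_{n+1}\to\Lambda_n[1]$ and a structure map $X_n\to X_{n+1}$; by the commutativity of the pushout square, each $g\colon L_i\to X_n$ becomes, after composing with $X_n\to X_{n+1}$, a morphism factoring through $\gamma_i$. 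Finally set $V:=\mathrm{hocolim}\,X_n$ and let $\phi\colon X\to V$ be the canonical map.

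Two verifications remain. To see that $V\in\Vcal$, I would use that each $L_i$ is compact, so $\Hom_\T(L_i,V)=\varinjlim\Hom_\T(L_i,X_n)$; hence any $L_i\to V$ factors through some $X_n$, and therefore through $\gamma_i$ after passing along $X_{n+1}\to V$, giving $F_i(V)=0$. To see that $\phi$ is a $\Vcal$-preenvelope, fix $f\colon X\to V'$ with $V'\in\Vcal$ and extend it up the tower: applying $\Hom_\T(-,V')$ to the stage-$n$ triangle, a lift of $f_n\colon X_n\to V'$ to $X_{n+1}$ exists precisely when each composite $f_n\circ g\colon L_i\to V'$ factors through $\gamma_i$, which holds exactly because $V'\in\Vcal$. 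Assembling the resulting compatible family $(f_n)_n$ through the telescope triangle $\bigoplus X_n\xrightarrow{\,1-\mathrm{shift}\,}\bigoplus X_n\to V\to(\bigoplus X_n)[1]$ then yields $f_\omega\colon V\to V'$ with $f_\omega\phi=f$. I expect the main obstacle to be exactly this universal property: keeping the liftings compatible along the tower and producing the map \emph{out of} the homotopy colimit at the limit stage (where one must argue via the telescope triangle rather than a naive colimit), together with the set-theoretic bookkeeping ensuring that all coproducts involved are set-indexed.

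For the final clause, once $\Vcal$ is preenveloping and cosuspended I would simply invoke Example~\ref{ex TTF}(3): a cosuspended preenveloping subcategory is the coaisle of a t-structure, whose aisle is necessarily ${}^{\perp_0}\Vcal$, since in any torsion pair the aisle coincides with the left $0$-orthogonal of the coaisle. Hence $({}^{\perp_0}\Vcal,\Vcal)$ is a t-structure.
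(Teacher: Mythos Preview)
Your proposal is correct and follows essentially the same approach as the paper's proof: both reformulate definability via surjectivity of $\Hom_\T(\gamma_i,-)$ for maps $\gamma_i$ between compacts, build an $\omega$-tower of homotopy pushouts (equivalently, cones of maps into direct sums) indexed by all morphisms from the domains into the current stage, take the Milnor colimit, and verify membership in $\Vcal$ and the preenvelope property using compactness. Your discussion of the telescope triangle for producing the map out of the homotopy colimit is in fact more explicit than the paper, which simply asserts that the compatible family factors through the Milnor colimit; the final clause via Example~\ref{ex TTF}(3) is identical.
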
}

\begin{proof}
{Since $\Vcal$ is definable, by definition, there is a set of maps {$\{\phi_i:X_i\rightarrow Y_i| i \in I\}$} in $\Tcal^c$ such that an object $V$ in $\T$ lies in $\Vcal$ if and only if $\Hom_\Tcal(\phi_i,V)$ is surjective for all $i$ {in} $I$. We need to build a $\Vcal$-preenvelope for a given object $Z=Z_0$ in $\Tcal$. First, {setting $K_{i,0}:=\Hom_\Tcal(X_i,Z_0)$, we define the map}
$${\xymatrix{\overline{X}_0:=\bigoplus_{i\in I} X_i^{(K_{i,0})}\ar[rrr]^{\overline{\phi_0}:=\bigoplus_{i\in I}\phi_i^{(K_{i,0})}}&&&\overline{Y}_0:=\bigoplus_{i\in I} Y_i^{(K_{i,0})}}}$$
and consider the canonical universal map $a_0:\overline{X}_0\longrightarrow Z_0$. Let $z_0:Z_0\longrightarrow Z_1$ denote the corresponding component of the cone of the map $(\overline{\phi_0},-a_0)^T:\overline{X}_0\longrightarrow \overline{Y}_0\oplus Z_0$, and proceed inductively to define objects $Z_n$ and maps $z_n:Z_n\longrightarrow Z_{n+1}$. We prove that the Milnor colimit $V_Z$ of the inductive system $(Z_n,z_n)_{n\in\mathbb{N}_0}$ yields a $\Vcal$-approximation of $Z$. Let us first observe that indeed $V_Z$ lies in $\Vcal$. Since both $X_i$ and $Y_i$ are compact for any $i$ {in} $I$, it follows that 
$$\Hom_\Tcal(\phi_i,V_Z)\cong \varinjlim_{n\in\mathbb{N}_0}\Hom_\Tcal(\phi_i,Z_n):\varinjlim_{n\in\mathbb{N}_0}\Hom_\Tcal(Y_i,Z_n)\longrightarrow \varinjlim_{n\in\mathbb{N}_0}\Hom_\Tcal(X_i,Z_n).$$
In order to see that this map is surjective, it suffices to show that any element in $\varinjlim_{n\in\mathbb{N}_0}\Hom_\Tcal(X_i,Z_n)$ which is represented by a map $g$ in $\Hom_\Tcal(X_i,Z_{m})$ for some $m$ in $\mathbb{N}_0$ lies in the image of $\Hom_\Tcal(\phi_i,V_Z)$. By construction of the inductive system, there clearly is a map $h$ in $\Hom_\Tcal(Y_i,Z_{m+1})$ such that $h\phi_i=z_mg$. As a consequence, the element in $\varinjlim_{n\in\mathbb{N}_0}\Hom_\Tcal(Y_i,Z_n)$ represented by the map $h$ is a preimage via $\varinjlim_{n\in\mathbb{N}_0}\Hom_\Tcal(\phi_i,Z_n)$ of the element in $\varinjlim_{n\in\mathbb{N}_0}\Hom_\Tcal(X_i,Z_n)$ that we started with. This proves that $\Hom_\Tcal(\phi_i,V_Z)$ is surjective for all $i$ in $I$ and, thus, $V_Z$ lies in $\Vcal$.}

{We proceed to prove that the induced map $v:Z\longrightarrow V_Z$ is a left $\Vcal$-approximation. Given a morphism $f:Z\longrightarrow V$ with $V$ in $\Vcal$, the composition $fa_0$ factors through $\overline{\phi_0}$. Let $\overline{f}:\overline{Y}_0\longrightarrow V$ be such that ${\overline{f}\ \overline{\phi_0}}=fa_0$. By construction of $Z_1$ as the cone of $(\overline{\phi_0},-a_0)^T$, it follows that there is a map $f_1:Z_1\longrightarrow V$ such that $f=f_1z_0$. Inductively, one can then see that the map $f$ indeed factors successively through any $Z_n$ and, therefore, through the Milnor colimit $V_Z$, as wanted.}

{The final statement follows from Example \ref{ex TTF}(3).}
\end{proof}


\subsection{Cosuspended TTF triples}
Before we discuss how definability arises in the context of cosuspended TTF triples, we first prove {some} auxiliary statements.

\begin{lemma}\label{ttf is codet}
Let $(\Ucal,\Vcal,\Wcal)$ be a cosuspended TTF triple in $\T$. 
Then $(\Ucal,\Vcal)$ is a nondegenerate t-structure if and only if the coheart $\mathscr{C}:=\Vcal\cap\Wcal[-1]$ cogenerates $\T$. In this case, we have that $\Vcal={}^{\perp_{>0}}\mathscr{C}$.
\end{lemma}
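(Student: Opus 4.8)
The plan is to first reduce the statement to a single orthogonality condition and then prove the two implications, keeping the final identity for last. Since the triple is cosuspended, $\Vcal$ is a cosuspended torsion class, so $(\Ucal,\Vcal)$ is automatically a t-structure and $(\Vcal,\Wcal)$ is automatically a co-t-structure; thus $\mathscr{C}=\Vcal\cap\Wcal[-1]$ is genuinely its coheart. Unwinding the definition of cogeneration, the condition ``$\mathscr{C}$ cogenerates $\T$'' is equivalent to ${}^{\perp_{\mathbb{Z}}}\mathscr{C}:=\{Y\in\T : \Hom_\T(Y,C[i])=0\ \forall i\in\mathbb{Z},\ \forall C\in\mathscr{C}\}$ being zero. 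Before starting, I would record two elementary facts valid in any cosuspended TTF triple: because $\Wcal$ is suspended and $\Vcal$ is cosuspended one has $\mathscr{C}[i]\subseteq\Wcal$ for $i\geq 1$ and $\mathscr{C}[i]\subseteq\Vcal$ for $i\leq 0$. From $\Hom_\T(\Vcal,\Wcal)=0$ this yields $\Vcal\subseteq{}^{\perp_{>0}}\mathscr{C}$, and from $\Hom_\T(\Ucal,\Vcal)=0$ it yields $\Hom_\T(\Ucal,\mathscr{C}[i])=0$ for all $i\leq 0$.

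For the implication ``cogeneration $\Rightarrow$ nondegenerate'' I would argue directly. If $Y\in\bigcap_{n}\Vcal[n]$ then every shift $Y[m]$ lies in $\Vcal\subseteq{}^{\perp_{>0}}\mathscr{C}$, so $\Hom_\T(Y,C[i-m])=0$ whenever $i>0$; letting $m$ range over $\mathbb{Z}$ this covers all degrees, whence $Y\in{}^{\perp_{\mathbb{Z}}}\mathscr{C}=0$. Dually, if $Y\in\bigcap_{n}\Ucal[n]$ then each $Y[m]\in\Ucal$ and the second recorded fact gives $\Hom_\T(Y,C[i-m])=0$ for all $i\leq 0$, again covering all degrees, so $Y=0$.

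The hard direction, and the main obstacle, is ``nondegenerate $\Rightarrow$ cogeneration'', whose crux is the claim $\Ucal\cap{}^{\perp_{\mathbb{Z}}}\mathscr{C}=0$. Given $U$ in this intersection, the key step is to show $U[-1]\in\Ucal$, i.e. $\Hom_\T(U,V[1])=0$ for every $V\in\Vcal$: cotruncating $V[1]$ with respect to the co-t-structure as a triangle $V_0\to V[1]\to W_0\to V_0[1]$ with $V_0\in\Vcal$ and $W_0\in\Wcal$, one checks that $W_0$ is an extension of $V_0[1]$ by $V[1]$, both of which lie in the extension-closed class $\Vcal[1]$, so $W_0\in\Vcal[1]\cap\Wcal=\mathscr{C}[1]$. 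Since $\Hom_\T(U,V_0)=0$ (as $U\in\Ucal$) and $\Hom_\T(U,W_0)=0$ (as $U\in{}^{\perp_{\mathbb{Z}}}\mathscr{C}$), the triangle forces $\Hom_\T(U,V[1])=0$. Iterating gives $U[m]\in\Ucal$ for all $m$, hence $U\in\bigcap_n\Ucal[n]=0$ by nondegeneracy. With this in hand, a general $Y\in{}^{\perp_{\mathbb{Z}}}\mathscr{C}$ is handled via its $(\Ucal,\Vcal)$-truncation triangle $U\to Y\to V\to U[1]$: a long exact sequence (using $\Vcal\subseteq{}^{\perp_{>0}}\mathscr{C}$) shows $U\in{}^{\perp_{>0}}\mathscr{C}$, which together with the second recorded fact puts $U\in\Ucal\cap{}^{\perp_{\mathbb{Z}}}\mathscr{C}=0$; thus $Y\cong V\in\Vcal$, and applying this to all shifts yields $Y\in\bigcap_n\Vcal[n]=0$.

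Finally, for the identity $\Vcal={}^{\perp_{>0}}\mathscr{C}$ under these (now equivalent) hypotheses, the inclusion $\subseteq$ is already recorded; for $\supseteq$ I would take $Y\in{}^{\perp_{>0}}\mathscr{C}$, truncate it as $U\to Y\to V\to U[1]$, and use the same long exact sequence to see $U\in{}^{\perp_{>0}}\mathscr{C}$ and hence (second recorded fact) $U\in{}^{\perp_{\mathbb{Z}}}\mathscr{C}=0$, so $Y\in\Vcal$. I expect the only delicate point to be the cotruncation argument identifying $W_0$ as a coheart object: this is precisely what lets the coheart ``bridge'' $\Vcal$ and $\Vcal[1]$, converting the single-degree orthogonality hypothesis into membership in every $\Ucal[n]$.
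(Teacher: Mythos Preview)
Your argument is correct, and in the hard direction it rests on the same key observation as the paper's: cotruncating a shifted $\Vcal$-object with respect to $(\Vcal,\Wcal)$ produces a $\Wcal$-part lying in the coheart (up to shift), so that orthogonality to $\mathscr{C}$ lets one descend one degree. The packaging, however, is organised differently. You first isolate the sublemma $\Ucal\cap{}^{\perp_{\mathbb{Z}}}\mathscr{C}=0$, proving it by showing $\Hom_\T(U,\Vcal[1])=0$ via the cotruncation of an arbitrary $V[1]$ and then iterating; afterwards you reduce a general $Y\in{}^{\perp_{\mathbb{Z}}}\mathscr{C}$ to this case by a single $(\Ucal,\Vcal)$-truncation and conclude using $\bigcap_n\Vcal[n]=0$. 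The paper instead works with $X\in{}^{\perp_{\mathbb{Z}}}\mathscr{C}$ directly: it cotruncates each $v^k(X)$ with respect to $(\Vcal[k-1],\Wcal[k-1])$, uses the universal property of the truncation maps to conclude $v^k(X)\cong v^{k-1}(X)$ for every $k$, and then invokes nondegeneracy twice to get $X=0$. Your modular version avoids the ``minimality'' step and recycles the same long-exact-sequence argument verbatim for the final identity $\Vcal={}^{\perp_{>0}}\mathscr{C}$; the paper's version is more compact in that it treats a general $X$ in a single pass. The easy direction and the final identity are argued in essentially the same way in both.
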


\begin{proof}
Let $(\Ucal,\Vcal,\Wcal)$ be a cosuspended TTF triple in $\T$. Suppose that $(\Ucal,\Vcal)$ is a nondegenerate t-structure and let $X$ be an object of $\T$ such that $\Hom_\T(X,\mathscr{C}[k])=0$, for all $k$ in $\mathbb{Z}$. Given an integer $k$ in $\mathbb{Z}$, let us denote by $u^k:\Tcal\longrightarrow \Ucal[k]$ and $v^k:\Tcal\longrightarrow \Vcal[k]$ the truncation functors corresponding to the t-structure $(\Ucal[k],\Vcal[k])$. Consider a truncation triangle of the object $v^k(X)$ for the co-t-structure $(\Vcal[k-1],\Wcal[k-1])$, yielding a diagram of the form
$$\xymatrix{u^k(X)\ar[r]&X\ar[r]^{f^k}&v^k(X)\ar[r]&u^k(X)[1]\\ &V_{k-1}\ar[r]^g & v^k(X)\ar@{=}[u]\ar[r]^h& W_{k-1}\ar[r]& V_{k-1}[1]}$$
with $V_{k-1}$ in $\Vcal[k-1]$ and $W_{k-1}$ in $\Wcal[k-1]$. 
We can easily deduce that $W_{k-1}$ lies in $\mathscr{C}[k]$ and, thus, $hf^k=0$ by assumption on $X$. Then there is a morphism $\alpha:X\longrightarrow V_{k-1}$ such that $g\alpha=f^k$. Now, since $V_{k-1}$ lies in $\Vcal[k-1]\subseteq \Vcal[k]$, it follows that $\alpha$ factors through the truncation $f^{k-1}:X\longrightarrow v^{k-1}(X)$. This then yields a map $v^{k-1}(X)\longrightarrow v^k(X)$. Considering the two compositions of this map with the canonical morphism $v^k(X)\longrightarrow v^{k-1}(X)$ and using the minimality of the maps $f^k$ and $f^{k-1}$, we conclude that both maps are isomorphisms. Since this holds for arbitrary $k$, the nondegeneracy of $(\Ucal,\Vcal)$ implies that $v^k(X)=0$ for all $k$ in $\mathbb{Z}$. Thus, $X$ must lie in $\cap_{n\in\mathbb{Z}}\Ucal[n]$ and, again by the nondegeneracy of $(\Ucal,\Vcal)$ it must, therefore, be zero.

Conversely, suppose that $\mathscr{C}$ cogenerates $\T$ and let $X$ lie in $\cap_{n\in\mathbb{Z}}\Ucal[n]$. Consider a morphism $f:X\longrightarrow C[k]$ for $k$ in $\mathbb{Z}$ and $C$ in $\mathscr{C}$. Now, since $C[k]$ lies in $\Vcal[k]$ and $X$ lies in $\Ucal[k]$, it follows that $f=0$ and, thus, by assumption, also $X=0$. Similarly, if $X$ is in $\cap_{n\in\mathbb{Z}}\Vcal[n]$, since $C[k]$ lies in $\Wcal[k-1]$, it must follow that $X=0$. 

Finally, assuming that $\mathscr{C}$ cogenerates $\T$, we show that $\Vcal={}^{\perp_{>0}}\mathscr{C}$. It is always the case that $\Vcal\subseteq {}^{\perp_{>0}}\mathscr{C}$. For the reverse inclusion, let $X$ be an object in ${}^{\perp_{>0}}\mathscr{C}$ and consider the truncation triangle 
$$v(X)[-1]\longrightarrow u(X)\longrightarrow X\longrightarrow v(X).$$ 
Given $C$ in $\mathscr{C}$ and applying the functor $\Hom_\T(-,C[k])$ to the triangle, we see that $\Hom_\T(v(X),C[k+1])=\Hom_\T(X,C[k])=0$ for all $k>0$ and, thus, we have $\Hom_\T(u(X),C[k])=0$ for all $k>0$. Moreover, since $\mathscr{C}[k]\subset \Vcal$ for all $k\leq 0$, we see that $\Hom_\T(u(X),C[k])=0$ for all $k\leq 0$. Since $\mathscr{C}$ cogenerates $\Tcal$, we have that $u(X)=0$ and $X$ belongs to $\Vcal$, as wanted.
\end{proof}

\begin{lemma}\label{coheart is prod}
Let $\mathscr{C}$ be subcategory of $\Tcal$. Suppose that $\mathscr{C}$ is closed under products and summands, and that all objects in $\mathscr{C}$ are pure-injective. Then there is an object $C$ in $\mathscr{C}$ such that $\mathscr{C}=\mathsf{Prod}(C)$.
\end{lemma}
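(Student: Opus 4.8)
The plan is to transport the whole problem into the locally coherent Grothendieck category $\Gcal:=\Modr\Tcal^c$ via the restricted Yoneda functor $\mathbf{y}$ and to solve it there by a cogeneration argument for injective objects. By Theorem~\ref{Krause}, $\mathbf{y}$ restricts to an equivalence between the pure-injective objects of $\Tcal$ and the injective objects of $\Gcal$: it is fully faithful on pure-injectives by part~(3), and every injective is of the form $\mathbf{y}E$ by the final assertion of the theorem. Since $\mathbf{y}$ preserves products (because $\Hom_\Tcal(K,\prod_i X_i)=\prod_i\Hom_\Tcal(K,X_i)$ for compact $K$) and, as $\Tcal$ is idempotent complete, reflects direct summands among pure-injectives (an idempotent on $\mathbf{y}E$ corresponds under full faithfulness to an idempotent on $E$, which splits), the image $\mathscr{I}:=\mathbf{y}\mathscr{C}$ is a class of injective objects of $\Gcal$ closed under products and summands. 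It therefore suffices to produce a single object $I_0$ in $\mathscr{I}$ with $\mathscr{I}=\mathsf{Prod}(I_0)$; putting $C:=\mathbf{y}^{-1}(I_0)$ then gives an object $C$ in $\mathscr{C}$ with $\mathscr{C}=\mathsf{Prod}(C)$.

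To build $I_0$ I would first collect the ``small'' members of $\mathscr{I}$. Because $\Gcal$ is locally coherent, its finitely generated objects form a set up to isomorphism, and hence so do their injective envelopes. Let $\mathscr{S}$ be a set of representatives for those injective envelopes $E(A)$, with $A$ finitely generated, that occur as subobjects of some member of $\mathscr{I}$; each such $E(A)$ is an injective subobject of an injective $E$ in $\mathscr{I}$, hence a direct summand of $E$, and therefore lies in $\mathscr{I}$. Now set $I_0:=\prod_{S\in\mathscr{S}}S$, which lies in $\mathscr{I}$ by closure under products. The key claim is that every $E$ in $\mathscr{I}$ is cogenerated by the subobjects $E(A)$ it contains: the canonical morphism $E\longrightarrow\prod_{A}E(A)$, indexed by the finitely generated subobjects $A\le E$ and with $A$-component a chosen splitting $E\twoheadrightarrow E(A)$, is a monomorphism, since every nonzero subobject of $E$ contains a nonzero finitely generated $A$, and the $A$-component restricts to the identity on $A\subseteq E(A)$.

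Finally, since $E$ is injective this monomorphism splits, so $E$ is a direct summand of $\prod_{A}E(A)$; as each $E(A)$ is isomorphic to a summand of $I_0$, this product is a summand of a power of $I_0$, whence $E\in\mathsf{Prod}(I_0)$. This yields $\mathscr{I}\subseteq\mathsf{Prod}(I_0)$, while $\mathsf{Prod}(I_0)\subseteq\mathscr{I}$ is immediate from closure under products and summands; thus $\mathscr{I}=\mathsf{Prod}(I_0)$ and we conclude as above. The one genuinely delicate point, and the main obstacle, is set-theoretic: a priori $\mathscr{C}$ is a proper class containing objects of unbounded size (for instance all powers $X^I$ of any $X$ in $\mathscr{C}$), so one must isolate a set that product-generates it. This is exactly what the descent to finitely generated subobjects and their injective envelopes accomplishes, resting on the two abelian-categorical facts that the finitely generated objects of the locally coherent category $\Gcal$ form a set and that an injective subobject always splits off. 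I would emphasise that one should resist passing instead to $\mathsf{Def}(\mathscr{C})$ and its elementary cogenerator, since the pure-injective objects of $\mathsf{Def}(\mathscr{C})$ may be strictly more numerous than those of $\mathscr{C}$.
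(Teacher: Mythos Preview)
Your argument is correct. Both you and the paper begin identically, transporting the problem to $\Gcal=\Modr\Tcal^c$ via the restricted Yoneda functor and using Theorem~\ref{Krause} to identify $\mathscr{C}$ with a class $\mathscr{I}$ of injectives in $\Gcal$ closed under products and summands. The difference lies in how the single injective is produced. The paper invokes hereditary torsion theory: it forms the torsion pair $({}^{\perp_0}(\mathbf{y}\mathscr{C}),\mathsf{Cogen}(\mathbf{y}\mathscr{C}))$ and cites Stenstr\"om's result that such a torsionfree class is $\mathsf{Cogen}(\mathbf{y}C)$ for a single injective $\mathbf{y}C$, from which $\Prod(\mathbf{y}\mathscr{C})=\Prod(\mathbf{y}C)$ follows by comparing the injectives in the torsionfree class. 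You instead give a direct, self-contained construction: collect the injective envelopes of finitely generated subobjects appearing in members of $\mathscr{I}$ (a set, since $\Gcal$ is locally finitely generated and well-powered), take their product $I_0$, and show each $E\in\mathscr{I}$ embeds into---hence splits off from---a power of $I_0$. Your approach avoids the external reference and makes the set-theoretic bound explicit, at the cost of a slightly longer argument; the paper's version is terser but leans on standard torsion-theoretic machinery. Both are perfectly valid, and in fact your construction is essentially what underlies the Stenstr\"om result being quoted.
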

\begin{proof}
Consider the hereditary torsion pair $({}^{\perp_0}(\mathbf{y}\mathscr{C}),\mathcal{F}:=\mathsf{Cogen}(\mathbf{y}\mathscr{C}))$ in $\Modr\T^c$. It is well-known that there is an injective object $\mathbf{y}C$ in $\Modr\T^c$ such that $\mathcal{F}=\mathsf{Cogen}(\mathbf{y}C)$ (see \cite[VI, Proposition 3.7]{Sten}). 
It follows that $\Prod(\mathbf{y}\mathscr{C})=\Prod(\mathbf{y}C)$. Since $\mathbf{y}$ commutes with products and is fully faithful on pure-injectives, we get $\mathscr{C}=\Prod(\mathscr{C})=\Prod(C)$.
\end{proof}

\begin{lemma}\label{definable co-t}
Let $\mathscr{C}$ be an additive subcategory of $\Tcal$ and $\Vcal={}^{\perp_{>0}}\mathscr{C}$. Then the following statements are equivalent.
\begin{enumerate}
\item $\Vcal$ is product-closed and every object in $\mathscr{C}$ is pure-injective.
\item $\Vcal$ is definable.
\end{enumerate}
Moreover, if the above conditions are satisfied, then there is a t-structure $(\Ucal,\Vcal)$.
\end{lemma}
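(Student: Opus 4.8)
The plan is to prove the two implications separately; the final \emph{moreover} statement will then come essentially for free. Two remarks hold for an arbitrary additive subcategory $\mathscr{C}$ and $\Vcal={}^{\perp_{>0}}\mathscr{C}$: the class $\Vcal$ is always closed under coproducts, and it is always cosuspended. Indeed, if $X\in\Vcal$ then for $i>0$ one has $\Hom_\Tcal(X[-1],C[i])=\Hom_\Tcal(X,C[i+1])=0$, so $\Vcal$ is closed under negative shifts, while extension-closedness follows from the long exact sequence obtained by applying $\Hom_\Tcal(-,C[i])$ to a triangle. Granting that $\Vcal$ is definable (condition (2), which by the equivalence holds under (1) as well), Proposition \ref{def is preenv} shows $\Vcal$ is preenveloping, and since it is cosuspended the same proposition yields the t-structure $({}^{\perp_0}\Vcal,\Vcal)$. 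This settles the last assertion.

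For (1)$\Rightarrow$(2) I would argue through the functor category. By Corollary \ref{definable subcat} it suffices to prove $\mathsf{Def}(\Vcal)\subseteq\Vcal$, i.e.\ that $\mathbf{y}X\in\mathsf{Def}(\mathbf{y}\Vcal)$ implies $\Hom_\Tcal(X,C[i])=0$ for every $C\in\mathscr{C}$ and $i>0$. Fixing $C$ and $i$, pure-injectivity of $C$ gives pure-injectivity of $C[i]$, so $\mathbf{y}(C[i])$ is injective in $\Modr\Tcal^c$ and, by Theorem \ref{Krause}, $\Hom_\Tcal(X,C[i])\cong\Hom_{\Modr\Tcal^c}(\mathbf{y}X,\mathbf{y}(C[i]))$. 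It is therefore enough to show $\Hom_{\Modr\Tcal^c}(F,\mathbf{y}(C[i]))=0$ for all $F\in\mathsf{Def}(\mathbf{y}\Vcal)$. Using the explicit description in Example \ref{flat} of $\mathsf{Def}(\mathbf{y}\Vcal)$ as pure subobjects of direct limits of products of objects of $\mathbf{y}\Vcal$, I would verify the vanishing in three stages: on a product $\prod_j\mathbf{y}V_j=\mathbf{y}(\prod_j V_j)$ it holds because $\prod_j V_j\in\Vcal$ by the product-closedness assumed in (1); it is inherited by direct limits since $\Hom(\varinjlim F_k,-)=\varprojlim\Hom(F_k,-)$; and it passes to pure subobjects because injectivity of $\mathbf{y}(C[i])$ makes $\Hom_{\Modr\Tcal^c}(-,\mathbf{y}(C[i]))$ send the defining pure monomorphism to a surjection.

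For (2)$\Rightarrow$(1), product-closedness of $\Vcal$ is immediate, as definable subcategories are closed under products. The substantive point is the pure-injectivity of the objects of $\mathscr{C}$, which I expect to be the main obstacle. I would use the factorisation criterion of Theorem \ref{Krause}: for $C\in\mathscr{C}$ and a set $I$, one must factor the summation map $C^{(I)}\to C$ through the canonical map $\iota\colon C^{(I)}\to C^I$. Applying $\mathbf{y}$ sends $\iota$ to the canonical map $\coprod\mathbf{y}C\to\prod\mathbf{y}C$, a pure monomorphism in $\Modr\Tcal^c$, so $\iota$ is pure and, writing $Z$ for its cone, the induced map $\mathbf{y}C^I\to\mathbf{y}Z$ is a pure epimorphism. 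Using that $C\in\Vcal$ (this is where I would invoke that $\mathscr{C}$ sits inside $\Vcal$, as it does when $\mathscr{C}$ is the coheart of the adjacent co-t-structure), both $C^{(I)}$ and $C^I$ lie in $\Vcal$ by closure under (co)products, and then $Z\in\Vcal$ by closure of the definable class $\Vcal$ under pure quotients. Finally, applying $\Hom_\Tcal(-,C)$ to the triangle defining $Z$ places the obstruction to factoring the summation map through $\iota$ in $\Hom_\Tcal(Z,C[1])$, which vanishes because $Z\in\Vcal$, $C\in\mathscr{C}$ and $\Vcal={}^{\perp_{>0}}\mathscr{C}$; hence $C$ is pure-injective.

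The crux is this last step: forcing the cone $Z$ of $\coprod C\to\prod C$ into $\Vcal$ so that $\Hom_\Tcal(Z,C[1])=0$. This is precisely where both the closure of a definable subcategory under pure quotients and the containment $\mathscr{C}\subseteq\Vcal$ are needed; the containment is genuinely essential, since restricted Yoneda orthogonality alone does not make objects of $\Vcal^{\perp_{>0}}$ pure-injective. A milder technical point, in the first implication, is that one must exploit the concrete form of $\mathsf{Def}(\mathbf{y}\Vcal)$ supplied by Example \ref{flat} rather than abstract closure conditions, which are not known to detect definability inside $\Tcal$.
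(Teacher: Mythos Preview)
Your argument for (1)$\Rightarrow$(2) and for the \emph{moreover} clause is essentially identical to the paper's: both pass through Corollary~\ref{definable subcat} and the description of $\mathsf{Def}(\mathbf{y}\Vcal)$ in Example~\ref{flat}, and both invoke Proposition~\ref{def is preenv} for the final t-structure.

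For (2)$\Rightarrow$(1) you take a genuinely different, though structurally parallel, route. The paper argues with the pure-injective envelope: for $X\in\mathscr{C}$ it takes the pure monomorphism $f\colon X\to I(X)$, uses closure of the definable class $\Vcal$ under pure-injective envelopes and pure quotients to place both $I(X)$ and $Z=\mathsf{cone}(f)$ in $\Vcal$, and then $\Hom_\Tcal(Z,X[1])=0$ forces the triangle to split, exhibiting $X$ as a summand of a pure-injective. Your version replaces this single pure triangle by the one on $\iota\colon C^{(I)}\to C^I$ and appeals instead to the summation-map criterion of Theorem~\ref{Krause}. Both arguments are correct and rest on the same mechanism: produce a pure triangle with two terms in $\Vcal$, push the third into $\Vcal$ by definability, and kill the connecting map via $\Vcal={}^{\perp_{>0}}\mathscr{C}$. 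The paper's choice is marginally slicker in that it needs only closure under pure-injective envelopes (Corollary~\ref{closure properties}) rather than the verification that $\iota$ is a pure monomorphism, but there is no substantive difference.

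You are right to flag the hypothesis $\mathscr{C}\subseteq\Vcal$: your argument needs $C\in\Vcal$ to get $C^{(I)},C^I\in\Vcal$, and the paper's argument needs it just as much, since ``closed under pure-injective envelopes'' only yields $I(X)\in\Vcal$ once $X\in\Vcal$. The lemma as stated does not assume this, and indeed it is not automatic for an arbitrary additive $\mathscr{C}$; however, in the sole application (Theorem~\ref{triple is cosilting}) one has $\mathscr{C}=\Vcal\cap\Wcal[-1]\subseteq\Vcal$, so the issue is harmless there. Your explicit acknowledgement of this point is an improvement in clarity over the paper's proof.
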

\begin{proof}
Suppose that (1) holds. 
We have to show that every  object $X$   in $\textsf{Def}(\Vcal)$ lies in $\Vcal={}^{\perp_{>0}}\mathscr{C}$. By Corollary \ref{definable subcat}, the object  $\mathbf{y}X$ lies in the definable closure  $\mathsf{Def}(\mathbf{y}\Vcal)$ in $\Modr\T^c$ of all objects of the form $\mathbf{y}V$ with $V$ in $\Vcal$.
By the description of the  definable closure given in  Example~\ref{flat}, $\mathbf{y}X$ is a pure subobject of a direct limit in $\Modr\T^c$ of a directed system whose objects are products of the form $\prod_{i\in I}\mathbf{y}X_i$, with $X_i$ in $\Vcal$. Note that, since $\mathbf{y}$ commutes with products, {we have} $\prod_{i\in I}\mathbf{y}X_i=\mathbf{y}X_I$, where $X_I=\prod_{i\in I}X_i$. Now, since $\Vcal$ is closed under products, $X_I$ lies in $\Vcal$. Applying the functor $\Hom_{\Modr\T^c}(-,\mathbf{y}C[k])$, with $k>0$ and $C$ in $\mathscr{C}$ to the embedding $\mathbf{y}X\longrightarrow \varinjlim_I \mathbf{y}X_I$, and using the pure-injectivity of $C$ (and its shifts), we get an epimorphism 
$$\varprojlim_I\Hom_\T(X_I,C[k])\cong \Hom_{\Modr\T^c}(\varinjlim_I \mathbf{y}X_I, \mathbf{y}C[k])\twoheadrightarrow \Hom_{\Modr\T^c}(\mathbf{y}X,\mathbf{y}C[k])\cong \Hom_\T(X,C[k]).$$
Since $X_I$ lies in $\Vcal$, the domain of this map vanishes and, hence, so does the codomain, as wanted. 

Conversely, suppose that $\Vcal$ is definable. First,  the subcategory $\Vcal$ is closed under products. Let $X$ be an object in $\mathscr{C}$ and $f:X\rightarrow I(X)$ its pure-injective envelope in $\T$. Since definable subcategories are closed under pure-injective envelopes and pure quotients (see  Corollary \ref{closure properties}), it follows that both $I(X)$ and $Z:=\mathsf{cone}(f)$ lie in $\Vcal$. Since $\Hom_\T(\Vcal,\mathscr{C}[1])=0$ it follows that $\Hom_\T(Z,X[1])=0$, the triangle induced by $f$ splits and $X$ is a summand of $I(X)$, i.e. $X$ is pure-injective.

{The last statement of the lemma follows from Proposition \ref{def is preenv}, since $\Vcal$ is clearly cosuspended.}
\end{proof}

{Finally, we can now use the rather technical statements above to prove the main theorem of this section.}

\begin{theorem}\label{triple is cosilting}
Let $(\Ucal,\Vcal,\Wcal)$ be a cosuspended TTF triple in $\T$ such that the t-structure $(\Ucal,\Vcal)$ is nondegenerate. Then the following are equivalent.
\begin{enumerate}
\item $\Vcal$ is definable in $\T$;
\item $\Vcal={}^{\perp_{>0}}C$ for a pure-injective cosilting object $C$ in $\T$.
\end{enumerate}
\end{theorem}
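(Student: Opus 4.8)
The plan is to derive both implications by assembling the auxiliary Lemmas \ref{ttf is codet}, \ref{coheart is prod} and \ref{definable co-t}, the pivotal preliminary observation being that, since $(\Ucal,\Vcal)$ and $(\Vcal,\Wcal)$ are torsion pairs, both $\Vcal=\Ucal^{\perp_0}$ and $\Wcal=\Vcal^{\perp_0}$ are \emph{right} orthogonal classes and are therefore automatically closed under products. Throughout I write $\mathscr{C}:=\Vcal\cap\Wcal[-1]$ for the coheart; it is closed under summands (as $\Vcal$ and $\Wcal$ are) and, by the previous remark, under products. Since $(\Ucal,\Vcal)$ is a nondegenerate t-structure, Lemma \ref{ttf is codet} supplies the two facts I will use repeatedly: $\Vcal={}^{\perp_{>0}}\mathscr{C}$, and $\mathscr{C}$ cogenerates $\T$.

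For $(1)\Rightarrow(2)$ I would first feed the equality $\Vcal={}^{\perp_{>0}}\mathscr{C}$ into Lemma \ref{definable co-t}: definability of $\Vcal$ forces every object of $\mathscr{C}$ to be pure-injective. As $\mathscr{C}$ is moreover closed under products and summands, Lemma \ref{coheart is prod} produces an object $C\in\mathscr{C}$ with $\mathscr{C}=\Prod(C)$; in particular $C$ is pure-injective and $\Vcal={}^{\perp_{>0}}\mathscr{C}={}^{\perp_{>0}}\Prod(C)={}^{\perp_{>0}}C$, since vanishing of $\Hom_\T(-,C[i])$ propagates to all products and summands of $C$.

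It remains to check that this $C$ is genuinely \emph{cosilting}, i.e.\ that $({}^{\perp_{\leq 0}}C,{}^{\perp_{>0}}C)$ is a t-structure and $C\in{}^{\perp_{>0}}C$; the latter is immediate from $C\in\mathscr{C}\subseteq\Vcal={}^{\perp_{>0}}C$. I expect this verification to be the main obstacle, and I would settle it by proving $\Ucal={}^{\perp_{\leq 0}}C$, so that the candidate cosilting t-structure coincides with the given $(\Ucal,\Vcal)$. The inclusion $\Ucal\subseteq{}^{\perp_{\leq 0}}C$ is formal: as $C\in\Vcal$ and $\Vcal$ is cosuspended, $C[k]\in\Vcal$ for all $k\leq0$, whence $\Hom_\T(\Ucal,C[k])=0$. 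For the reverse inclusion I would take $X\in{}^{\perp_{\leq 0}}C$, form its truncation triangle $u(X)\to X\to v(X)\to u(X)[1]$, and prove $v(X)=0$; since $\mathscr{C}=\Prod(C)$ cogenerates $\T$, it suffices to show $\Hom_\T(v(X),C[k])=0$ for every $k\in\Z$. For $k>0$ this holds because $v(X)\in\Vcal={}^{\perp_{>0}}C$, and for $k\leq0$ the long exact sequence obtained by applying $\Hom_\T(-,C[k])$ to the triangle squeezes $\Hom_\T(v(X),C[k])$ between $\Hom_\T(X,C[k])=0$ (as $X\in{}^{\perp_{\leq0}}C$) and $\Hom_\T(u(X)[1],C[k])=\Hom_\T(u(X),C[k-1])=0$ (here $C[k-1]\in\Vcal$ since $k-1\leq0$, and $u(X)\in\Ucal$). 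Thus $v(X)=0$ and $X\cong u(X)\in\Ucal$, giving $\Ucal={}^{\perp_{\leq 0}}C$ and hence that $C$ is a pure-injective cosilting object with $\Vcal={}^{\perp_{>0}}C$.

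For $(2)\Rightarrow(1)$ the work is essentially already in place: given a pure-injective cosilting $C$ with $\Vcal={}^{\perp_{>0}}C={}^{\perp_{>0}}\Prod(C)$, I would apply Lemma \ref{definable co-t} to the additive subcategory $\Prod(C)$. Its hypotheses are met because $\Vcal$ is product-closed (being the right orthogonal $\Ucal^{\perp_0}$) and every object of $\Prod(C)$ is pure-injective: by Theorem \ref{Krause} the functor $\mathbf{y}$ sends $C$ to an injective object $\mathbf{y}C$ of $\Modr\T^c$, and products and summands of $\mathbf{y}C$ stay injective. The lemma then delivers definability of $\Vcal$, closing the equivalence.
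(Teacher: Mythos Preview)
Your proof is correct and, for the implication $(1)\Rightarrow(2)$, essentially identical to the paper's: same use of Lemmas~\ref{ttf is codet}, \ref{definable co-t}, \ref{coheart is prod}, and the same truncation-triangle argument to show $\Ucal={}^{\perp_{\leq 0}}C$ (the paper phrases the vanishing of $\Hom_\T(v(X),C[k])$ for $k\leq 0$ more tersely as ``$X$ and $u(X)[1]$ lie in ${}^{\perp_{\leq 0}}C$, hence so does $v(X)$'', but the content is the same).

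For $(2)\Rightarrow(1)$ you take a genuinely shorter route. You apply Lemma~\ref{definable co-t} directly with the additive subcategory $\Prod(C)$, using only that $\Vcal={}^{\perp_{>0}}C={}^{\perp_{>0}}\Prod(C)$ is product-closed and that $\Prod(C)$ consists of pure-injectives. The paper instead first proves that the coheart $\mathscr{C}$ equals $\Prod(C)$: for $X\in\mathscr{C}$ it forms the universal map $\phi\colon X\to C^{\Hom_\T(X,C)}$, checks the cone $Z$ lies in $\Vcal={}^{\perp_{>0}}C$, and then uses $X[1]\in\Wcal$ to conclude the triangle splits, so $X\in\Prod(C)$; only then does it invoke the biconditional established via Lemma~\ref{definable co-t} applied to $\mathscr{C}$. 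Your argument is cleaner for the bare equivalence; the paper's detour buys the additional identification $\mathscr{C}=\Prod(C)$, which is of independent interest (it is the dual of \cite[Lemma~4.5]{AMV1}) and is implicitly used elsewhere, e.g.\ to match the coheart description in Example~\ref{example (co)silting}(2).
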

\begin{proof}
First observe that by Lemma \ref{ttf is codet}, the coheart $\mathscr{C}=\Vcal\cap\Wcal[-1]$ cogenerates $\T$ and $\Vcal={}^{\perp_{>0}}\mathscr{C}$. Since $\Vcal$ is a TTF class, it is closed under products and, therefore, by Proposition \ref{definable co-t}, $\Vcal$ is definable if and only if every object in $\mathscr{C}$ is pure-injective. In that case, {since both $\Vcal$ and $\Wcal$ (and, thus, $\mathscr{C}$) are closed under products and summands}, by Lemma \ref{coheart is prod}, there is $C$ in $\T$ such that $\mathscr{C}=\Prod(C)$. 

(1)$\Rightarrow$ (2): Suppose now that $\Vcal$ is definable and let $C$ be as above. As observed, we have that $\Vcal={}^{\perp_{>0}}C$ and we only need to show that $\Ucal={}^{\perp_{\leq 0}}C$. Since $C[k]$ lies in $\Vcal$ for all $k\leq 0$, it is clear that $\Ucal\subseteq {}^{\perp_{\leq 0}}C$. Now let $X$ lie in ${}^{\perp_{\leq 0}}C$ and consider a truncation triangle as follows
$$u(X)\longrightarrow X\longrightarrow v(X)\longrightarrow u(X)[1]$$
with $u(X)$ in $\Ucal$ and $v(X)$ in $\Vcal$. Since both $X$ and $u(X)[1]$ lie in ${}^{\perp_{\leq 0}}C$, so does $v(X)$. However, $v(X)$ also lies in ${}^{\perp_{>0}}C$, showing that $v(X)=0$, since $C$ is a cogenerator of $\T$. Hence, we have that $\Ucal={}^{\perp_{\leq 0}}C$.

(2)$\Rightarrow$ (1): In order to show that $\Vcal$ is definable, it is enough to show that the coheart $\mathscr{C}$ coincides with $\Prod(C)$ (thus proving that every object in $\mathscr{C}$ is pure-injective). The argument is dual to the one used in \cite[Lemma 4.5]{AMV1}. Indeed, let $X$ be an object in $\mathscr{C}$, let $I$ denote the set $\Hom_\Tcal(X,C)$ and consider the induced universal map $\phi:X\longrightarrow C^I$. If $Z$ denotes the cone of the map $\phi$, then it is easy to check that $Z$ lies in ${}^{\perp_{>0}}C$ and, thus, the map $Z\longrightarrow X[1]$ of the induced triangle is zero, by the assumption on $X$. Hence, the triangle splits and $X$ lies in $\Prod(C)$. This shows that $\mathscr{C}\subseteq \Prod(C)$ and the reverse inclusion is clear.
\end{proof}

\begin{corollary}\label{comp gen t-str}
Let $\T$ be an algebraic, compactly generated triangulated category. Every nondegenerate compactly generated t-structure has a Grothendieck heart.
\end{corollary}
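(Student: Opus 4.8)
The plan is to realise the given t-structure as the cosilting t-structure of a pure-injective cosilting object and then apply Theorem~\ref{pure-inj Grothendieck}; the bridge is Theorem~\ref{triple is cosilting}, which asks that the coaisle sit inside a cosuspended TTF triple and be definable.

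First I would fix a suspended generating set. Since $(\Ucal,\Vcal)$ is compactly generated we may write $(\Ucal,\Vcal)=({}^{\perp_0}(\Acal^{\perp_0}),\Acal^{\perp_0})$ for a set $\Acal\subseteq\T^c$, and because $(\Ucal,\Vcal)$ is a t-structure I may enlarge $\Acal$ to the smallest suspended subcategory of $\T$ containing it without changing the pair. As $\T^c$ is thick this subcategory still lies in $\T^c$ and, having only a set of isomorphism classes, may be taken to be a set. The algebraicity hypothesis now enters through Example~\ref{ex TTF}(2) (which rests on \cite[Theorem~3.11]{StPo}): a compactly generated torsion pair in an algebraic triangulated category admits a right adjacent torsion pair, so with $\Acal$ suspended we obtain a cosuspended TTF triple $(\Ucal,\Vcal,\Wcal)$ whose first torsion pair is our nondegenerate t-structure.

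The crux is that $\Vcal=\Acal^{\perp_0}$ is definable, and here I expect the argument to be short rather than delicate. By construction $\Vcal=\{X\in\T:\Hom_\T(A,X)=0\ \text{for all}\ A\in\Acal\}$, and for each compact $A$ the representable functor $\Hom_\T(A,-)\colon\T\to\mathsf{Mod}(\mathbb{Z})$ is coherent: taking $K=0$ and $L=A$ gives a presentation $\Hom_\T(K,-)\to\Hom_\T(L,-)\to\Hom_\T(A,-)\to 0$ of exactly the form required in the definition of a coherent functor. Thus $\Vcal$ is the common zero-set of the set $\{\Hom_\T(A,-):A\in\Acal\}$ of coherent functors, hence definable in $\T$.

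With definability established the remainder is assembly. Theorem~\ref{triple is cosilting}, applied to the cosuspended TTF triple $(\Ucal,\Vcal,\Wcal)$ with $(\Ucal,\Vcal)$ nondegenerate, produces a pure-injective cosilting object $C$ with $\mathbb{T}=(\Ucal,\Vcal)=({}^{\perp_{\leq 0}}C,{}^{\perp_{>0}}C)$. In particular $\mathbb{T}$ is a cosilting t-structure, hence nondegenerate and, by Example~\ref{ex smash}(1), smashing, so condition~(2) of Theorem~\ref{pure-inj Grothendieck} holds and that theorem yields a Grothendieck heart $\Hcal_\mathbb{T}$. The only place the full strength of the hypotheses is genuinely needed is in the second paragraph: algebraicity is what guarantees the right adjacent co-t-structure $\Wcal$, without which there is no TTF triple to feed into Theorem~\ref{triple is cosilting}, while compact generation is precisely what makes $\Vcal$ definable through representable coherent functors.
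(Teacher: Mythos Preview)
Your proof is correct and follows essentially the same route as the paper's: obtain the right adjacent co-t-structure from Example~\ref{ex TTF}(2) using algebraicity, observe that $\Vcal$ is definable since it is cut out by representable functors $\Hom_\T(A,-)$ with $A$ compact, and then combine Theorem~\ref{triple is cosilting} with Theorem~\ref{pure-inj Grothendieck}. The only minor difference is that you take the extra step of enlarging $\Acal$ to a suspended subcategory, which is harmless but unnecessary: once you know $(\Ucal,\Vcal)$ is a t-structure, $\Vcal$ is already cosuspended, so any right adjacent torsion pair $(\Vcal,\Wcal)$ is automatically a co-t-structure and the TTF triple is cosuspended.
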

\begin{proof}
From Example \ref{ex TTF}(2), every compactly generated t-structure $(\Ucal,\Vcal)$ in $\T$ admits a right adjacent co-t-structure $(\Vcal,\Wcal)$. It is clear that $\Vcal$ is definable as it is the subcategory obtained as the intersection of the kernels of the coherent functors $\Hom_\T(X,-)$ with $X$ in $\Ucal\cap\T^c$. Now, Theorem \ref{triple is cosilting} combined with Theorem \ref{pure-inj Grothendieck} finish the proof.
\end{proof}
The corollary above extends  \cite[Corollary 2.5]{BP}, which treats the special case when $\T$ is a  derived module category and the  compactly generated t-structure arises as an HRS-tilt of a torsion pair in the underlying module category.


\end{document}